\newtheorem{theorem}{Theorem}[section]
\newtheorem{lemma}[theorem]{Lemma}
\newtheorem{proposition}[theorem]{Proposition}
\newtheorem{corollary}[theorem]{Corollary}
\theoremstyle{definition}
\newtheorem{construction}[theorem]{Construction}
\newtheorem{remark}[theorem]{Remark}
\numberwithin{equation}{theorem}
\def\vector2#1#2{\left(\begin{array}{c} #1 \\ #2 \end{array}\right)}
\def\Cl{{\rm Cl}}
\def\CC{{\mathbb C}}
\def\KK{{\mathbb K}}
\def\TT{{\mathbb T}}
\def\ZZ{{\mathbb Z}}
\def\QQ{{\mathbb Q}}
\def\PP{{\mathbb P}}
\def\grad{{\rm grad}}
\def\Chi{{\mathbb X}}
\def\quot{/\!\!/}
\def\bangle#1{{\langle #1 \rangle}}
\def\rq#1{\widehat{#1}}
\def\t#1{\widetilde{#1}}
\def\b#1{\overline{#1}}
\def\SL{{\rm SL}}
\def\Spec{{\rm Spec}}
\def\cone{{\rm cone}}
\def\trop{{\rm trop}}
\def\lin{{\rm lin}}
\title[Non-complete rational $T$-varieties of complexity one]%
{Non-complete rational $T$-varieties  \\ of complexity one}
\author[J.~Hausen and M.Wrobel]{J\"urgen~Hausen and Milena Wrobel}
 \address{Mathematisches Institut, Universit\"at T\"ubingen,
Auf der Morgenstelle 10, 72076 T\"ubingen, Germany}
\email{juergen.hausen@uni-tuebingen.de}
\address{Mathematisches Institut, Universit\"at T\"ubingen,
Auf der Morgenstelle 10, 72076 T\"ubingen, Germany}
\email{milena.wrobel@math.uni-tuebingen.de}
\subjclass[2010]{14L30, 13A05, 13F15}
\begin{document}

\maketitle

\begin{abstract}
We consider rational varieties with a torus 
action of complexity one and extend the 
combinatorial approach via the Cox ring 
developed for the complete case in earlier 
work to the non-complete, e.g. affine, case. 
This includes in particular a description of
all factorially graded affine algebras 
of complexity one with only constant 
homogeneous invertible elements in terms of
canonical generators and relations. 
\end{abstract}


\section{The main results}
\label{sec:mainres}

The purpose of this note is to extend  
the toolkit developed in~\cite{HaSu,HaHeSu,HaHe}
for normal complete rational varietes with a torus 
action of complexity one also to non-complete 
varieties, for example affine ones. 
Recall that the \emph{complexity} of a variety~$X$ 
with an effective action of an algebraic torus~$T$
equals $\dim(X)-\dim(T)$.
The case of complexity zero is the theory of 
toric varieties which due to their complete 
combinatorial encoding form a very popular 
field of research.

Complexity one is a natural step beyond the toric 
case showing new phenomena as for example new 
classes of singularities.
However, due to the fact that the generic quotient 
is merely a curve, still much of the geometry is encoded in 
combinatorial properties of the torus action; 
see~\cite{KKMSD, Ti, AlHa} for general 
approaches based on this observation.
If we restrict to \emph{rational} varieties
with torus action of complexity one, 
then we have a finitely generated \emph{Cox ring} 
$$ 
\mathcal{R}(X) 
\ = \ 
\bigoplus_{\Cl(X)} \Gamma(X,\mathcal{O}(D)).
$$
If, in addition, $X$ is complete, then~\cite{HaSu} 
provides an explicit description of the 
Cox ring in terms of generators and specific 
trinomial relations.
Applying the machinery around Cox rings to this 
particular setting leads to an elementary
and very concrete approach in the complete case; 
see~\cite{ArDeHaLa} for the basics 
and~\cite{ArHaHeLi, BeHaHuNi} for some applications.
As indicated, we extend this approach to the 
non-complete case.

We work over an algebraically closed field $\KK$ 
of characteristic zero.
The basic task is to describe all  
Cox rings of normal rational varieties $X$ with 
an effective torus action $T \times X \to X$ 
of complexity one also in the non-complete case. 
As it is needed for the uniqueness of 
the Cox ring, we require  
$\Gamma(X,\mathcal{O}^*) = \KK^*$.
From the algebraic point of view, a Cox ring is 
firstly a finitely generated integral $\KK$-algebra~$R$
graded by a finitely generated abelian group~$K$
such that that there are only constant invertible 
homogeneous elements.
The most important characterizing property of 
a Cox ring is then \emph{$K$-factoriality}, which 
means that we have unique factorization in the 
multiplicative monoid of non-zero homogeneous elements. 
Observe that for a torsion-free grading group,
$K$-factoriality is equivalent to the usual unique 
factorization but in general it is definitely 
weaker.

In a first step, we describe all finitely 
generated integral $\KK$-algebras~$R$ that admit 
an effective factorial $K$-grading of complexity 
one, where effective means that the $w \in K$ 
with $R_w \ne 0$ generate $K$ as a group and 
complexitiy one means that the rational vector
space $K \otimes \QQ$ is of dimension one less
than~$R$ is.
The first results in this direction concern 
the case  $R_0 = \KK$ in dimension two,
see~\cite{Mo,Ku}.
The case $R_0 = \KK$ in arbitrary dimension 
was settled in~\cite{HaHe} 
and occurs as part of \emph{Type~2} in our 
subsequent considerations. 
A simple example of \emph{Type~1} presented
below is the coordinate algebra of the special 
linear group~$\SL(2)$:
$$ 
R 
\ = \ 
\KK[T_1,T_2,T_3,T_4] / \bangle{T_1T_2+T_3T_4-1},
\qquad
Q 
\ = \ 
\left[
\begin{array}{rrrr}
1 & -1 & 0 & 0 
\\
0 & 0 & 1 & -1 
\end{array}
\right],
$$
where the matrix $Q$ specifies a $\ZZ^2$-grading 
of $R$ by assigning to the variable $T_i$ the $i$-th 
column of $Q$ as its degree; note that this reflects 
multiplication of diagonal torus elements $s$ from 
the left and $t$ from the right modulo trivially 
acting pairs~$s,t$.
Here comes the general construction.

\begin{construction}
\label{constr:RAP0}
Fix integers $r,n > 0$, $m \ge 0$ and a partition 
$n = n_\iota+\ldots+n_r$ starting at $\iota \in \{0,1\}$.
For each $\iota \le i \le r$, fix a tuple
$l_{i} \in \ZZ_{> 0}^{n_{i}}$ and define a monomial
$$
T_{i}^{l_{i}}
\ := \
T_{i1}^{l_{i1}} \cdots T_{in_{i}}^{l_{in_{i}}}
\ \in \
\KK[T_{ij},S_{k}; \ \iota \le i \le r, \ 1 \le j \le n_{i}, \ 1 \le k \le m].
$$
We will also write $\KK[T_{ij},S_{k}]$ for the 
above polynomial ring. 
We distinguish two settings for the input data 
$A$ and $P_0$ of the graded $\KK$-algebra $R(A,P_0)$.

\medskip
\noindent
\emph{Type 1.}
Take $\iota = 1$.
Let $A := (a_1, \ldots, a_r)$ be a list of pairwise 
different elements of~$\KK$.
Set $I := \left\{1, \ldots, r-1\right\}$
and define for every $i \in I$ a polynomial
$$
g_{i} 
\ := \ 
T_i^{l_i} - T_{i+1}^{l_{i+1}} - (a_{i+1}-a_i) 
\ \in \ 
\KK[T_{ij}, S_k].
$$
We build up an $r \times (n+m)$ matrix 
from the exponent vectors $l_1, \ldots, l_r$ of these 
polynomials:
$$
P_{0}
\ := \
\left[
\begin{array}{cccccc}
l_{1} &  & 0 & 0  &  \ldots & 0
\\
\vdots  & \ddots & \vdots & \vdots &  & \vdots
\\
 0 &  & l_{r} & 0  &  \ldots & 0
\end{array}
\right].
$$

\medskip
\noindent
\emph{Type 2.}
Take $\iota = 0$.
Let $A:= (a_0, \ldots, a_r)$ be a $2 \times (r+1)$-matrix with pairwise 
linearly independent columns $a_i \in \KK^2$. 
Set $I := \left\{0, \ldots, r-2\right\}$ and for every $i \in I$
define
$$
g_{i}
\ :=  \
\det
\left[
\begin{array}{lll}
T_i^{l_i} & T_{i+1}^{l_{i+1}} & T_{i+2}^{l_{i+2}}
\\
a_i & a_{i+1}& a_{i+2}
\end{array}
\right]
\ \in \
\KK[T_{ij},S_{k}].
$$
We build up an $r \times (n+m)$ matrix 
from the exponent vectors $l_0, \ldots, l_r$ of these 
polynomials:
$$
P_{0}
\ := \
\left[
\begin{array}{ccccccc}
-l_{0} & l_{1} &  & 0 & 0  &  \ldots & 0
\\
\vdots & \vdots & \ddots & \vdots & \vdots &  & \vdots
\\
-l_{0} & 0 &  & l_{r} & 0  &  \ldots & 0
\end{array}
\right].
$$
We now define the ring $R(A,P_0)$ simultaneously 
for both types in terms of the data $A$ and $P_0$.
Denote by $P_0^*$ the transpose of $P_0$ and consider 
the projection
$$
Q \colon \ZZ^{n+m} 
\ \to \ 
K_{0} 
\ := \ 
\ZZ^{n+m}/\mathrm{im}(P_{0}^{*}).
$$
Denote by $e_{ij},e_{k} \in \ZZ^{n+m}$ the canonical
basis vectors corresponding to the variables 
$T_{ij}$, $S_{k}$.
Define a $K_0$-grading on $\KK[T_{ij},S_{k}]$ 
by setting
$$
\deg(T_{ij}) \ := \ Q(e_{ij}) \ \in \ K_{0},
\qquad
\deg(S_{k}) \ := \ Q(e_{k}) \ \in \ K_{0}.
$$
This is the coarsest possible grading of
$\KK[T_{ij},S_{k}]$ leaving the variables 
and the $g_i$ homogeneous.
In particular, we have  a $K_{0}$-graded 
factor algebra
$$
R(A,P_{0})
\ := \
\KK[T_{ij},S_{k}] / \bangle{g_{i}; \ i \in I}.
$$
\end{construction}

We gather the basic properties of the graded 
algebras just constructed; 
the corresponding proofs are given in 
Section~\ref{sec:sec2}.
Below, we mean by a \emph{$K_0$-prime}
a homogeneous non-zero non-unit which,
whenever it divides a product of 
homogeneous elements, it also divides one of
the factors.

\begin{theorem}
\label{Theorem1}
Let $R(A,P_0)$ be a $K_0$-graded ring as provided by 
Construction~\ref{constr:RAP0}.
\begin{enumerate}
\item
The ring $R(A,P_0)$ is an integral, normal 
complete intersection of dimension $n+m-r+1$.
\item
The $K_0$-grading on $R(A,P_0)$ is effective, 
factorial of complexity one and $R(A,P_0)$
has only constant invertible homogeneous 
elements. 
\item
The variables $T_{ij}$ and $S_k$ define pairwise 
nonassociated $K_0$-prime generators for $R(A,P_0)$.
\item
Suppose $r\geq 2$ and $n_il_{ij}>1$ for all $i, j$. 
Then the ring $R(A,P_0)$ is factorial if and only if $K_0$
is torsion free. Moreover, 
\begin{enumerate}
\item
in case of Type~1, $R(A,P_0)$ is factorial if 
and only if one has
$\gcd(l_{i1}, \ldots, l_{in_i}) = 1$ 
for $i = 1, \ldots, r$,
\item
in case of Type~2, $R(A,P_0)$ is factorial if 
and only if
any two of $l_i := \gcd(l_{i1}, \ldots, l_{in_i})$
are coprime.
\end{enumerate}
\end{enumerate}
\end{theorem}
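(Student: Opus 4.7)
The plan is to treat the four parts in sequence, exploiting the trinomial shape of the $g_i$ and the block structure of $P_0$.

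For part~(i), the relations $g_i$ form a regular sequence thanks to their disjoint-block structure. In Type~1 each $g_i$ is linear in the two monomials $T_i^{l_i}$ and $T_{i+1}^{l_{i+1}}$, whose variables lie in distinct blocks, so one eliminates monomials step by step, at each stage expressing $T_{i+1}^{l_{i+1}}$ in terms of $T_1^{l_1}$ plus a constant, and obtains an integral quotient of dimension $n+m-r+1$. In Type~2 the vanishing of all $g_i$ amounts to the vector $(T_0^{l_0},\ldots,T_r^{l_r})$ lying in the two-dimensional row span of $A$, which gives a parametric presentation over $\KK^2$ whose generic fibers are irreducible products of torus twists. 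Normality then follows from Serre's criterion: $S_2$ is automatic for complete intersections, and $R_1$ is verified via the Jacobian criterion, with the singular locus contained in the subvariety where two or more of the monomials $T_i^{l_i}$ vanish, hence of codimension at least two.

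For parts~(ii) and~(iii), effectiveness of the $K_0$-grading is built into the construction and the dimension count $(n+m-r+1)-(n+m-r)=1$ yields complexity one. Pairwise non-associatedness of the $T_{ij}$ and $S_k$ follows from the block structure of $P_0$, which separates the degrees of distinct variables. Each variable is shown to be $K_0$-prime by analyzing the quotient $R(A,P_0)/\langle T_{ij}\rangle$: the relations involving block~$i$ collapse to equations of the form $T_{i'}^{l_{i'}}=c_{i'}$ with $c_{i'}\in\KK^*$, and although these may factor in the ordinary polynomial ring, the $K_0$-grading is fine enough that no such factorization is $K_0$-homogeneous, so the quotient remains $K_0$-graded integral. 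Absence of nonconstant homogeneous units reduces to $R_0=\KK$, which follows from irreducibility together with the dimension count. Finally, $K_0$-factoriality is deduced by combining $K_0$-primality of the $T_{ij},S_k$ with the explicit form of the $g_i$: every homogeneous element can be reduced, after dividing out the $K_0$-prime variables, to a $K_0$-unit, yielding unique factorization in the monoid of nonzero homogeneous elements.

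For part~(iv), one invokes the general principle that a $K$-factorially graded algebra is factorial in the usual sense if and only if $K$ is torsion-free; together with~(ii) this reduces the problem to computing the torsion of $K_0=\ZZ^{n+m}/\mathrm{im}(P_0^*)$, which by Smith normal form is controlled by the gcd of the $r\times r$ minors of $P_0$. In Type~1 the block-diagonal shape of $P_0$ yields
$$
K_0 \ \cong \ \ZZ^m \,\oplus\, \bigoplus_{i=1}^r\bigl(\ZZ^{n_i-1}\oplus\ZZ/\gcd(l_{i1},\ldots,l_{in_i})\ZZ\bigr),
$$
so torsion vanishes precisely when $\gcd(l_{i1},\ldots,l_{in_i})=1$ for every $i$, giving~(a). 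In Type~2 any two columns of block~$0$ are proportional, so a nonzero $r\times r$ minor can use at most one block-$0$ column; the nonzero minors thus have the shape $\pm\prod_{i=1}^r l_{i,c_i}$ (no block-$0$ column) or $\pm l_{0,c_0}\prod_{i\neq i_0,\,i\geq 1}l_{i,c_i}$ (one block-$0$ column, skipping some block $i_0\in\{1,\ldots,r\}$), and the gcd of all of them equals $\gcd_{i_0=0,\ldots,r}\prod_{i\neq i_0}l_i$, which is $1$ exactly when the $l_i$ are pairwise coprime, giving~(b).

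The main obstacle is establishing $K_0$-factoriality in part~(ii): the fine interplay between the trinomial relations and the $K_0$-grading must be controlled carefully, and in particular verifying that dividing out a variable $T_{ij}$ preserves $K_0$-graded integrality relies crucially on the specific block structure of $P_0$ (each variable appears in only one block-row of $P_0$) together with the linearity of each $g_i$ in just two of the monomials $T_i^{l_i}$.
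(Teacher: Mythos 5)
Your overall architecture matches the paper's (Jacobian criterion for (i), orbit/degree analysis for (iii), Smith normal form for the torsion computation in (iv)), but several load-bearing steps are wrong or missing. The most serious is your treatment of homogeneous units: you claim their constancy ``reduces to $R_0=\KK$, which follows from irreducibility together with the dimension count.'' This is false on two counts. First, $R_0\neq\KK$ here: each monomial $T_i^{l_i}$ has degree $Q(\text{$i$-th row of }P_0)=0$, so $R_0$ is the coordinate ring of a curve (for Type~1 it is isomorphic to $\KK[T_r^{l_r}]$, a polynomial ring in one variable). Second, even when $R_0=\KK$ one must still exclude units of nonzero degree (compare $\KK[T^{\pm1}]$). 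The paper spends a Gr\"obner-basis lemma and a careful degree argument (Proposition~\ref{prop:hominv2const} and Lemma~\ref{LemmGl}) precisely to show that a homogeneous unit must have degree zero and that $R_0$ is a polynomial ring, hence has only constant units; none of this is replaceable by a dimension count. Relatedly, your route to $K_0$-factoriality --- ``every homogeneous element can be reduced, after dividing out the $K_0$-prime variables, to a $K_0$-unit'' --- is false: an element such as $T_1^{l_1}-c$ for generic $c\in\KK$ is homogeneous of degree zero, divisible by no variable, and is not a unit. The paper handles exactly these leftover elements by showing that the degree-zero part of the localization at the product of all variables is the coordinate ring of a punctured affine line, hence factorial, and then invokes the localization criterion of \cite[Cor.~3.4.1.6]{ArDeHaLa} together with $K_0$-primality of the variables.

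In part~(iv) you invoke ``the general principle that a $K$-factorially graded algebra is factorial if and only if $K$ is torsion-free.'' Only the ``if'' direction is a general principle (\cite[Thm.~3.4.1.11]{ArDeHaLa}); the ``only if'' direction fails in general (e.g.\ $\KK[x]$ with the effective $\ZZ/2\ZZ$-grading $\deg x=\bar 1$ is factorial), which is exactly why the statement carries the hypotheses $r\geq 2$ and $n_il_{ij}>1$ --- hypotheses your argument never uses. The paper closes this direction by showing that factoriality forces the generators $T_{ij}$ to be prime (Proposition~\ref{prop:VarK0prime}), that primality of $T_{ij}$ is equivalent to the gcd conditions (Proposition~\ref{prop:Gcd}), and that these in turn force $K_0$ to be torsion-free; some such argument is indispensable. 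Your Smith-normal-form computations of the torsion of $K_0$ in both types are correct and agree with the paper, but they only establish the equivalence of torsion-freeness with the gcd conditions, not the equivalence with factoriality. Finally, two smaller points: non-associatedness of the variables does not in general follow from ``the block structure separates the degrees'' (distinct variables can share a degree in degenerate cases); the paper argues instead via the codimension of common zero sets. And your $K_0$-primality argument correctly identifies that $R(A,P_0)/\langle T_{ij}\rangle$ may be reducible as a ring, but the assertion that ``the $K_0$-grading is fine enough'' to exclude homogeneous zero divisors is precisely the point needing proof; the paper proves it by exhibiting $V(T_{11})\cap\b{X}$ as the closure of a single $H_0$-orbit and checking multiplicity one via the Jacobian.
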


Observe that the situation of~(iv) can always be 
achieved by eliminating the variables that occur
in a linear term of some relation.
The following result shows that 
Construction~\ref{constr:RAP0} yields in fact all
affine algebras with property~(ii) of the above 
theorem; see Section~\ref{sec:sec2} for the proof.

\begin{theorem}
\label{Theorem2}
Let $K$ be a finitely generated abelian group and
$R$ a finitely generated, integral, normal 
$\KK$-algebra with an effective, factorial 
$K$-grading of complexity one and only constant 
invertible homogeneous elements. 
Then $R$ is isomorphic to a $K_0$-graded 
$\KK$-algebra $R(A,P_0)$ as provided by 
Construction~\ref{constr:RAP0}.
\end{theorem}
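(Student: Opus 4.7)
The plan is to recover the combinatorial input $A$ and $P_0$ from the geometry of the torus action on $X:=\Spec R$, and then to match $R$ with a suitable $R(A,P_0)$ via a surjection and a dimension comparison with Theorem~\ref{Theorem1}. Set $T:=\Spec\KK[K]$, so the $K$-grading gives an effective $T$-action on $X$ of complexity one, and hence a rational quotient $q\colon X\to Y$ to a smooth curve $Y$ with $\KK(X)^T=\KK(Y)$. Factoriality of the grading, together with the assumption on invertible homogeneous elements, identifies the $K$-primes of $R$, up to scalars, with the $T$-invariant prime divisors of $X$. I split the latter into \emph{horizontal} divisors (dominating $Y$) and \emph{vertical} ones (contracted to a point of $Y$), group the vertical divisors by their image points $a_i$, and label the corresponding $K$-prime representatives as $S_1,\ldots,S_m$ and $T_{i1},\ldots,T_{in_i}$. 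This already records $r$, $m$, and the partition $n=n_\iota+\ldots+n_r$.

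The main obstacle is showing that $Y$ is a rational curve, embedded as an open subset of $\PP^1$. I would do this by $T$-equivariantly embedding $X$ into a complete normal $T$-variety $\b X$ via Sumihiro, whose Cox ring is of the form given by~\cite{HaSu, HaHe}, so that the rational quotient of $\b X$ is $\PP^1$ and $Y$ is an open subset. The dichotomy $Y=\mathbb A^1$ versus $Y=\PP^1$ then matches Types~1 and~2 of Construction~\ref{constr:RAP0}. To extract the exponents, I fix for each $a_i$ a uniformizer $\pi_i \in \KK(Y)$ and set $l_{ij}:=\ord_{V(T_{ij})}(q^*\pi_i)$. Since $q^*\pi_i$ is $T$-invariant and its principal divisor is supported on vertical divisors (horizontal ones dominate $Y$ and so contribute neither zeros nor poles), the element $T_i^{l_i}$ agrees with $q^*\pi_i$, up to a scalar in Type~1 and up to the factor $T_0^{l_0}$ in Type~2. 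Rescaling each $T_{ij}$ by a suitable scalar absorbs the constants and yields precisely the relations $g_i$: the affine differences $T_i^{l_i}-T_{i+1}^{l_{i+1}}=a_{i+1}-a_i$ in Type~1, and the determinantal trinomials $g_i$ in Type~2, the latter arising from the Plücker identity for three sections of a line bundle on~$\PP^1$.

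The chosen generators and relations define a graded $\KK$-algebra homomorphism $\varphi\colon R(A,P_0)\to R$. It is surjective because every $K$-prime of $R$ is, up to a scalar, one of the $T_{ij}$ or $S_k$, and these generate $R$ by $K$-factoriality. Both $R(A,P_0)$ and $R$ are integral domains of the same dimension $n+m-r+1$: for $R(A,P_0)$ this is Theorem~\ref{Theorem1}(i), and for $R$ it is $\dim R=\mathrm{rank}(K)+1=(n+m-r)+1$, once one verifies $K\cong K_0$ via the fact that the degrees of the $T_{ij},S_k$ in $K$ satisfy exactly the relations encoded by $P_0$; this last point uses the $K$-primality of the $T_{ij},S_k$ and the homogeneity of the $g_i$. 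A surjection between integral domains of equal dimension is an isomorphism, completing the proof.
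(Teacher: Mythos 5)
Your overall strategy --- pass to the quotient curve, take canonical sections of the distinguished invariant divisors as generators, read off the relations from affine respectively Pl\"ucker relations among uniformizers, and conclude via a surjection of domains of equal dimension --- is essentially the paper's route, which packages the same geometry through the characteristic space $X_0\to X_0/H$ over the possibly non-separated quotient curve and the known presentation of its Cox ring (\cite[Sec.~4.4.2]{ArDeHaLa}). But two of your steps do not go through as written. The first is the rationality of $Y$: applying \cite{HaSu,HaHe} to a Sumihiro completion $\b{X}$ is circular, because those results require the complete variety to be \emph{rational} (equivalently, to have finitely generated divisor class group), and since $X$ is birational to $Y\times T$, rationality of $\b{X}$ is equivalent to rationality of $Y$ --- the very thing you want to prove. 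The actual source of rationality is the hypothesis that $K$ is finitely generated combined with $K$-factoriality: every invariant divisor on $X$ is the divisor of a homogeneous element, pullback of points embeds $\Cl(Y)$ into a subquotient of $K$, so $\Cl(Y)$ is finitely generated, which excludes positive genus; absence of nonconstant invertible homogeneous elements then pins $Y$ down to $\mathbb{A}^1$ or $\PP^1$. This is exactly what the paper extracts from \cite[Thm.~4.4.2.1]{ArDeHaLa}.

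The second gap is your justification of surjectivity: the claim that ``every $K$-prime of $R$ is, up to a scalar, one of the $T_{ij}$ or $S_k$'' is false. Already for $R=\KK[T_1,T_2]$ with $\deg T_1=\deg T_2=1$ there are infinitely many pairwise nonassociated $K$-primes $aT_1+bT_2$, namely the fibres of $q$ over the non-distinguished points of $Y$. What you must show instead is that each such ``generic vertical'' $K$-prime lies in the subalgebra generated by your chosen elements --- in Type~1 it equals $T_1^{l_1}+c$ for a constant $c$, in Type~2 a linear combination of $T_0^{l_0}$ and $T_1^{l_1}$ --- and this is precisely the content of the Cox-ring presentation of the quotient curve that the paper invokes; it does not follow from $K$-factoriality alone. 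Relatedly, your identification $K\cong K_0$ requires that the kernel of $\ZZ^{n+m}\to K$ equal $\mathrm{im}(P_0^*)$ on the nose; homogeneity of the $g_i$ only yields the inclusion $\mathrm{im}(P_0^*)\subseteq\ker$, and equality (not just equality of ranks, which is what the dimension count gives you) needs an argument such as the one in \cite[Thm.~4.4.2.2]{ArDeHaLa} to which the paper defers.
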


We turn to Cox rings of rational varieties with 
a torus action of complexity one. 
They will be obtained as suitable downgradings 
of the algebras $R(A,P_0)$ of Construction~\ref{constr:RAP0}.
Here comes the precise recipe.

\begin{construction}
\label{constr:RAPdown}
Let integers $r$, $n = n_\iota + \ldots + n_r$, $m$ 
and data $A$ and $P_0$ of Type~1 or Type~2 as 
in Construction~\ref{constr:RAP0}. 
Fix $1 \le s \le n + m - r$, choose an integral
$s \times (n + m)$ matrix $d$ and build the 
$(r+s) \times (n + m)$ stack matrix
$$
P 
\ := \
\left[
\begin{array}{c}
P_0
\\
d
\end{array}
\right].
$$
We require the columns of $P$ to be pairwise 
different primitive vectors generating
$\QQ^{r+s}$ as a vector space. 
Let $P^*$ denote the transpose of $P$ and 
consider the projection
$$
Q \colon 
\ZZ^{n+m} 
\ \to \ 
K 
\ := \ 
\ZZ^{n+m} / \mathrm{im}(P^*).
$$
Denoting as before by $e_{ij}, e_k \in \ZZ^{n+m}$ the 
canonical basis vectors corresponding to
the variables $T_{ij}$ and $S_k$, we obtain a 
$K$-grading on $\KK[T_{ij}, S_k]$ by setting
$$
\deg(T_{ij}) \ := \ Q(e_{ij} ) \ \in \ K,
\qquad\qquad
\deg(S_k) \ := \ Q(e_k) \ \in \ K.
$$
This $K$-grading coarsens the $K_0$-grading of 
$\KK[T_{ij},S_k ]$ given in Construction~\ref{constr:RAP0}. 
In particular, we have the $K$-graded factor algebra
$$
R(A,P)
\ := \
\KK[T_{ij},S_{k}] / \bangle{g_{i}; \ i \in I}.
$$
\end{construction}

We present the basic properties of this 
construction; see Section~\ref{sec:sec2} 
for the proof.
Below, we say that the grading is 
\emph{almost free} if removing any 
variable $T_{ij}$ or $S_k$ still leaves 
enough variables such that their degree generate
the grading group.

\goodbreak

\begin{theorem}
\label{Theorem3}
Let $R(A,P)$ be a $K$-graded ring as provided by 
Construction~\ref{constr:RAPdown}. 
\begin{enumerate}
\item
The $K$-grading on $R(A,P)$ is almost free, 
factorial, and $R(A,P)$
has only constant invertible homogeneous elements.
\item
The variables $T_{ij}$ and $S_k$ define pairwise different
nonassociated $K$-prime generators for $R(A,P)$.
\end{enumerate}
\end{theorem}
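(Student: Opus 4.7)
The plan is to exploit the observation that $R(A,P)$ coincides as a $\KK$-algebra with $R(A,P_0)$, the $K$-grading being a coarsening of the $K_0$-grading from Construction~\ref{constr:RAP0} along the surjection $\pi \colon K_0 \twoheadrightarrow K$ induced by the inclusion $\mathrm{im}(P_0^*) \subseteq \mathrm{im}(P^*)$. Theorem~\ref{Theorem1} thus supplies the toolkit, and the task is to transfer each of its conclusions across~$\pi$.

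Almost freeness is a purely lattice-theoretic statement: removing the variable $T_{i_0 j_0}$ (the case of an $S_k$ is parallel) asks that $Q(e_{i_0 j_0})$ lie in the subgroup of $K$ generated by the other images $Q(e_{ij}), Q(e_k)$, equivalently that $e_{i_0 j_0}$ be a $\ZZ$-combination of the remaining standard basis vectors together with an element of $\mathrm{im}(P^*)$. This reduces to finding a $\ZZ$-combination of rows of $P$ with a $\pm 1$ entry in column $(i_0,j_0)$, which follows from B\'ezout's identity applied to that column, primitive by hypothesis.

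The remaining assertions hinge on showing that the variables $T_{ij}, S_k$, which are $K_0$-prime by Theorem~\ref{Theorem1}(iii), stay $K$-prime. Given $T \mid fg$ with $T$ a variable and $f,g$ non-zero $K$-homogeneous, expand $f = \sum_w f_w$ and $g = \sum_{w'} g_{w'}$ into $K_0$-homogeneous parts indexed over the fibers of $\pi$. Since $T$ is $K_0$-homogeneous and $T \mid fg$ in the ring, $T$ divides every $K_0$-homogeneous component $(fg)_u = \sum_{w+w'=u} f_w g_{w'}$. The main obstacle is that $(fg)_u$ is in general a sum rather than a single product, so the $K_0$-primality of $T$ cannot be applied directly. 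I would resolve this by isolating an extremal element of the $K_0$-support via a suitable linear order on $K_0$ modulo torsion: the extremal product $f_{w_{\min}} g_{w'_{\min}}$ appears as a single summand in some $(fg)_u$, so $K_0$-primality of $T$ yields $T \mid f_{w_{\min}}$ or $T \mid g_{w'_{\min}}$, and an induction on the combined support size of $f$ and $g$ finishes the claim.

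With $K$-primality of the variables in hand, the remaining statements follow. For $K$-factoriality: $R$ is Noetherian, so every $K$-homogeneous non-unit factors into $K$-irreducibles, and uniqueness of the factorization (i.e., every $K$-irreducible is $K$-prime) reduces via the same extremal-support argument to the $K_0$-factoriality of Theorem~\ref{Theorem1}(ii). Any $K$-homogeneous invertible has trivial $K$-prime factorization, and the support-extremal argument concentrates it in a single $K_0$-degree, making it $K_0$-homogeneous, hence constant by Theorem~\ref{Theorem1}(ii). Nonassociatedness of the variables is inherited from the $K_0$-setting, since only constants are available for rescaling.
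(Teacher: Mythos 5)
Most of your proposal is sound, and where the paper simply cites the literature you supply self-contained arguments: the unit argument is exactly the paper's (decompose $f$ and its inverse into $K_0$-homogeneous parts along the free kernel $\ZZ^s$ of $K_0\to K$ and compare lexicographically extremal terms), the B\'ezout argument for almost freeness is a correct unwinding of the cited \cite[Lemma~2.1.4.1]{ArDeHaLa}, and your extremal-support induction showing that a $K_0$-homogeneous $K_0$-prime stays $K$-prime is a valid replacement for the citation of \cite[Lemma~3.4.3.5]{ArDeHaLa} --- the key points being that a $K_0$-homogeneous $T$ with $T\mid fg$ divides every $K_0$-component of $fg$, and that the lexicographically minimal $K_0$-component of $fg$ is the single product $f_{w_{\min}}g_{w'_{\min}}$, nonzero because $R(A,P_0)$ is integral.

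The gap is in the factoriality step. Your claim that ``every $K$-irreducible is $K$-prime reduces via the same extremal-support argument to $K_0$-factoriality'' does not go through: the extremal-support argument is only available for divisors that are themselves $K_0$-homogeneous, since it rests on the implication that $T\mid h$ forces $T$ to divide every $K_0$-component of $h$. A $K$-irreducible element of $R(A,P)$ need not be $K_0$-homogeneous (for instance a sum of two variables of equal $K$-degree but distinct $K_0$-degrees), and for such an element neither $K_0$-primality nor $K_0$-factoriality can be invoked componentwise; the reduction you describe is therefore not a proof. The route the paper takes (via \cite[Thms.~3.4.1.5, 3.4.1.11 and Cor.~3.4.1.6]{ArDeHaLa}) uses precisely the $K$-primality of the variables that you have already established, combined with the graded Nagata-type criterion: localize by the product $t$ of all variables; in $R_t$ every $K$-homogeneous element is a Laurent monomial times an element of $K$-degree zero, and the degree-zero part of $R_t$ with respect to $K$ is a Laurent polynomial ring in $s$ variables over the degree-zero part with respect to $K_0$ (the coordinate ring of a punctured affine line), hence factorial; the criterion then yields $K$-factoriality of $R(A,P)$. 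Replacing your one-line reduction by this localization argument (or by the explicit citation) closes the gap.
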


Knowledge of the Cox ring allows to (re)construct 
the underlying varieties. 
As in the complete case, we will obtain  
$A_2$-varieties, i.e.~admitting an embedding 
into a toric variety.
This comprises in particular the affine and,
more generally, the quasiprojective case.
We make use of the language of bunched rings,
see~\cite{ArDeHaLa} for an introduction.

\begin{construction}
\label{constr:RAPandPhi}
Let $R(A,P)$ be a $K$-graded ring as provided by 
Construction~\ref{constr:RAPdown} and 
$\mathfrak{F} = (T_{ij},S_k)$ the 
canonical system of generators.
Consider
$$
H \ := \ \Spec \, \KK[K],
\qquad\qquad
\b{X}(A,P) \ :=  \ \Spec \, R(A,P),
$$
Then $H$ is a quasitorus and the $K$-grading
of $R(A,P)$ defines an action of $H$ on~$\b{X}$.
Any true $\mathfrak{F}$-bunch $\Phi$ defines 
an $H$-invariant open set and a good quotient
$$ 
\rq{X}(A,P,\Phi) \ \subseteq \ \b{X}(A,P),
\qquad\qquad
X(A,P,\Phi) \ = \ \rq{X}(A,P,\Phi) \quot H.
$$
The action of $H_0 = \Spec \, \KK[K_0]$ leaves 
$\rq{X}(A,P,\Phi)$ invariant and induces an 
action of the torus $T = \Spec \, \KK[\ZZ^s]$ 
on $X(A,P,\Phi)$.
\end{construction}

From~\cite[Thm.~3.2.1.4]{ArDeHaLa} and the 
observation that the presence of a torus action 
of complexity one implies rationality in case 
of a finitely generated divisor class group,
we infer the basic properties of the above 
construction.

\begin{theorem}
\label{Cor:1}
Consider a $T$-variety $X = X(A,P,\Phi)$ as 
provided by Construction~\ref{constr:RAPandPhi}.
Then $X$ is a normal, rational $A_2$-variety
with only constant invertible functions
and the action of $T$ on $X$ is of complexity one.
Dimension, divisor class group and Cox ring
of $X$ are given by
$$ 
\dim(X) \ = \ s+1,
\qquad 
\Cl(X) \ \cong \ K,
\qquad
\mathcal{R}(X) \ \cong \ R(A,P).
$$ 
\end{theorem}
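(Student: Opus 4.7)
The plan is to apply \cite[Thm.~3.2.1.4]{ArDeHaLa} directly, the preparation for which is almost entirely contained in Theorem~\ref{Theorem3}. First I would verify that the datum $(R(A,P), \mathfrak{F}, \Phi)$ constitutes a bunched ring in the sense of \cite{ArDeHaLa}: Theorem~\ref{Theorem3} provides almost freeness and factoriality of the $K$-grading, the absence of non-constant homogeneous units, and the fact that $\mathfrak{F} = (T_{ij},S_k)$ is a system of pairwise nonassociated $K$-prime generators. Integrality and normality of $R(A,P)$ as a $\KK$-algebra come from Theorem~\ref{Theorem1}(i), since $R(A,P)$ and $R(A,P_0)$ coincide as rings. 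Together with the requirement that $\Phi$ is a true $\mathfrak{F}$-bunch, these are exactly the bunched-ring axioms, so \cite[Thm.~3.2.1.4]{ArDeHaLa} outputs a normal $A_2$-variety $X = X(A,P,\Phi)$ with $\Gamma(X,\mathcal{O}^*) = \KK^*$, $\Cl(X) \cong K$, and $\mathcal{R}(X) \cong R(A,P)$.

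For the dimension I would compute: Theorem~\ref{Theorem1}(i) yields $\dim \b{X}(A,P) = n+m-r+1$, while the primitivity and spanning hypothesis on the columns of $P$ forces $\mathrm{rank}(P) = r+s$, hence $\dim H = \mathrm{rank}(K) = n+m-r-s$; consequently $\dim X = s+1$. For the complexity statement, I would observe that the inclusion $\mathrm{im}(P_0^*) \subseteq \mathrm{im}(P^*)$ induces a surjection $K_0 \twoheadrightarrow K$ whose kernel has rank $s$. Dualizing yields $H \subseteq H_0$ with quotient a torus of dimension $s$; this is the torus $T = \Spec\, \KK[\ZZ^s]$ from Construction~\ref{constr:RAPandPhi}, acting on the $(s+1)$-dimensional $X$ with complexity one.

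Rationality is then invoked via the general principle, cited in the paper's introduction, that a normal variety with an effective torus action of complexity one and finitely generated divisor class group is automatically rational.

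The main obstacle is conceptual rather than technical: one must carefully verify that the bunched-ring axioms of \cite{ArDeHaLa} are indeed matched by the output of Construction~\ref{constr:RAPdown}, and track the distinction between the gradings $K_0$ and $K$ so that the torus $T$ one eventually sees acting on $X$ is precisely the one anticipated in the statement. Once that alignment is made, every remaining assertion reduces to a direct citation or a short linear-algebra identity.
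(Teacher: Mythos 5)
Your proposal is correct and follows essentially the same route as the paper, which likewise derives the statement by feeding the bunched ring $(R(A,P),\mathfrak{F},\Phi)$ — with the axioms supplied by Theorem~\ref{Theorem3} — into \cite[Thm.~3.2.1.4]{ArDeHaLa} and then invoking the observation that a complexity-one torus action together with a finitely generated divisor class group forces rationality. Your additional spelled-out details (the dimension count and the identification of $T$ as the quotient $H_0/H$ coming from the kernel of $K_0\to K$) are consistent with the paper's Construction~\ref{constr:RAPandPhi} and do not change the argument.
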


The following converse is proven in 
Section~\ref{sec:sec2} 
and concerns \emph{$A_2$-maximal} varieties 
that means $A_2$-varieties that can not 
be realized as an open subset with 
nonempty complement of codimension at 
least two in another $A_2$-variety; 
this setting includes in particular the affine and, 
more generally, the semiprojective case.

\begin{theorem} 
\label{Cor:2}
Let $X$ be an irreducible, normal, rational, 
$A_2$-maximal variety 
with only constant invertible functions, 
finitely generated divisor class group
and a torus action of complexity one.
Then $X$ is equivariantly isomorphic to 
a variety $X(A,P,\Phi)$ provided by 
Construction~\ref{constr:RAPandPhi}.
\end{theorem}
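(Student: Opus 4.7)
The plan is to combine the algebraic classification of Theorem~\ref{Theorem2} with the bunched ring description of $A_2$-maximal varieties from~\cite{ArDeHaLa}. First, the Cox ring $R := \mathcal{R}(X)$ exists and is finitely generated: rationality of $X$ together with the complexity-one torus action and the finite generation of $\Cl(X)$ force finite generation of~$R$. By general Cox ring theory, $R$ carries an effective, factorial $\Cl(X)$-grading with only constant invertible homogeneous elements.

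The first main step is to refine this grading by lifting the $T$-action from $X$ to the total coordinate space $\b{X} := \Spec R$, producing an action that commutes with the action of the characteristic quasitorus $H := \Spec \KK[\Cl(X)]$. The combined effective action of $\t{H} := H \times T$ endows $R$ with a $K_0$-grading, where $K_0$ is the character group of $\t{H}$ and the projection $K_0 \to \Cl(X)$ is dual to the inclusion $H \hookrightarrow \t{H}$. A dimension count using $\dim \b{X} = \dim X + \dim (\Cl(X) \otimes \QQ)$ shows that the complexity of the $\t{H}$-action on $\b{X}$ matches the complexity of the $T$-action on $X$, hence equals one. Thus Theorem~\ref{Theorem2} applies and yields a $K_0$-graded isomorphism $R \cong R(A,P_0)$ for appropriate data $A, P_0$ of Type~1 or~2.

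The projection $K_0 \to \Cl(X)$ is encoded by an integral $s \times (n+m)$ matrix $d$ with $s = \dim T$. Stacking it below $P_0$ produces the matrix $P$ of Construction~\ref{constr:RAPdown}, and the previous isomorphism upgrades to a $\Cl(X)$-graded identification $R \cong R(A,P)$. Primitivity and pairwise distinctness of the columns of $P$, as well as the fact that they span $\QQ^{r+s}$, follow from the almost freeness of the $\Cl(X)$-grading and the $K$-primality of the variable generators $T_{ij}, S_k$ given by Theorem~\ref{Theorem3}. For the geometric reconstruction, equip $R = R(A,P)$ with its canonical system of generators $\mathfrak{F} = (T_{ij}, S_k)$ and invoke~\cite[Thm.~3.2.1.4]{ArDeHaLa}: the $A_2$-maximal varieties with Cox ring $R$ are in bijection with true $\mathfrak{F}$-bunches. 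This yields a true bunch $\Phi$ with $X \cong X(A,P,\Phi)$, and by construction the residual $T$-action matches the one coming from the $K_0$-grading, so the isomorphism is $T$-equivariant.

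The main obstacle I anticipate is the first step: lifting the $T$-action to $\b{X}$ so that it commutes with $H$ and produces exactly an effective grading to which Theorem~\ref{Theorem2} can be applied. This is delicate but essentially standard Cox ring machinery in the $A_2$-maximal setting; the remaining steps are translations between combinatorial and algebraic data, and between the bunched ring setup and the abstract geometric one.
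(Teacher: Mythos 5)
Your proposal is correct and follows essentially the same route as the paper, whose proof simply runs the argument of \cite[Thm.~4.4.1.6]{ArDeHaLa} --- lift the $T$-action to the total coordinate space, identify the Cox ring together with its refined grading via the structure theorem for factorially graded algebras of complexity one, and then recover $X$ from a true bunch using $A_2$-maximality --- with the paper's Theorem~\ref{Theorem2} replacing the completeness-dependent input. The one step you should make explicit before invoking Theorem~\ref{Theorem2} is why the refined $K_0$-grading is again \emph{factorial} (and effective after dividing out the ineffectivity of the $H\times T$-action); this is the standard fact that factoriality of a grading passes to refinements coming from a torus action, cf.\ the results collected in \cite[Sec.~3.4.1]{ArDeHaLa}.
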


In the case of \emph{affine}, normal,
rational varieties 
with a torus action of complexity one.
Here, the whole machinery 
boils down to the following statement.

\begin{corollary}
\label{Cor:3}
Let $X$ be an irreducible, normal, rational 
affine variety with 
only constant invertible functions, 
finitely generated divisor class group 
and an effective algebraic torus action
of complexity one. 
Then $X$ is equivariantly isomorphic 
to a variety $\Spec \, R(A,P)_0$ acted 
on by the torus $H_0/H$,
where $R(A,P)$ is as in Construction~\ref{constr:RAPdown}
and the columns of $P$ generate the extremal rays 
of a pointed cone in~$\QQ^{r+s}$. 
\end{corollary}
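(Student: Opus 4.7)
The plan is to deduce the corollary from Theorem~\ref{Cor:2} by specialising the bunched ring data to the affine setting. First, an affine normal variety is in particular semiprojective, hence $A_2$-maximal, as remarked just before Theorem~\ref{Cor:2}. All hypotheses of that theorem are therefore met, and it yields an equivariant isomorphism $X \cong X(A,P,\Phi)$ for some bunched ring data $(A,P,\Phi)$ with canonical system of generators $\mathfrak{F} = (T_{ij},S_k)$.

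The next step is to identify $\Phi$ using affineness. In the bunched ring formalism of~\cite{ArDeHaLa}, the variety $X(A,P,\Phi)$ is affine if and only if $\rq{X}(A,P,\Phi) = \b{X}(A,P) = \Spec\, R(A,P)$, i.e.\ every point is $\Phi$-semistable. In that case the bunch collapses to the single cone $\{Q(\gamma_0)\}$ with $\gamma_0 = \QQ_{\geq 0}^{n+m}$, and the good quotient by $H$ is the usual affine GIT quotient
$$
X \ = \ \b{X}(A,P) \git H \ = \ \Spec\, R(A,P)^H \ = \ \Spec\, R(A,P)_0.
$$
The residual torus action on $X$ is by $H_0/H$: dualising the surjection $K_0 \twoheadrightarrow K$, whose kernel is the free abelian group of rank $s$ cut out by the block $d$ of $P$, identifies $H_0/H$ with an algebraic torus of dimension $s$ acting effectively on $X$ with complexity one, matching Theorem~\ref{Cor:1}.

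What remains is to show that the cone condition on $P$ is both forced by and characterises the affine case. Combinatorially, $\{Q(\gamma_0)\}$ is a true $\mathfrak{F}$-bunch precisely when the columns of $P$ are the primitive ray generators of a pointed cone $\sigma \subseteq \QQ^{r+s}$; here $\sigma$ is the unique maximal cone of the fan of the ambient affine toric variety into which the bunched ring construction embeds $X$, and its extremal rays correspond to the torus-invariant prime divisors, i.e.\ via Theorem~\ref{Cor:2} and Theorem~\ref{Theorem3}(ii) to the $K$-prime generators $T_{ij},S_k$ whose $P$-degrees are exactly the columns of $P$. The main technical obstacle is this last chain of equivalences ``$X$ affine $\Leftrightarrow$ $\Phi$ is the trivial bunch $\Leftrightarrow$ the columns of $P$ span extremal rays of a pointed cone''. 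The cleanest way to execute it is to invoke the toric dictionary inside the bunched ring formalism: an affine toric variety is encoded by a single pointed cone whose primitive ray generators are precisely the weights attached to the invariant prime divisors, and transporting this description through the isomorphism $X \cong X(A,P,\Phi)$ delivers both the asserted form $X = \Spec\, R(A,P)_0$ and the stated pointedness.
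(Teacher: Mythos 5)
Your proposal follows essentially the same route as the paper: apply Theorem~\ref{Cor:2} (affine implies $A_2$-maximal), observe that affineness forces $\rq{X}(A,P,\Phi)=\b{X}(A,P)$ so that $X=\b{X}(A,P)\git H=\Spec\, R(A,P)_0$ with the residual $H_0/H$-action, and translate the resulting triviality of the bunch into the condition that the columns of $P$ generate the extremal rays of a pointed cone. The only cosmetic difference is that the paper records the trivial bunch as ``$\Phi$ contains the cone $\{0\}$'' rather than as the single projected face of the full orthant.
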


Finally, in Section~\ref{sec:sec-ex},
we illustrate our methods by discussing 
the well-known case of normal affine 
$\KK^*$-surfaces~\cite{FiKp,FlZa}.
We take a closer look at du Val 
singularities and show how their Cox rings 
and resolutions are obtained using our 
framework; see~\cite{FaGoLa, LiSu, MDB} 
for earlier treatments based on other methods.


\section{Proofs of the main results}
\label{sec:sec2}

We first show that the algebras provided by 
Constructions~\ref{constr:RAP0} and~\ref{constr:RAPdown} 
have indeed the desired properties:
the assertions of Theorem \ref{Theorem1} are verified
in Propositions~\ref{prop:hominv2const} to~\ref{prop:typ1fact}
and then Theorem~\ref{Theorem3} is proven.
We work in the notation of 
Constructions~\ref{constr:RAP0} and~\ref{constr:RAPdown}.

\begin{proposition}
\label{prop:hominv2const}
Let $R(A,P_0)$ be a $\KK$-algebra of Type~1 as in 
Construction~\ref{constr:RAP0}.
Then every $K_0$-homogeneous invertible element of $R(A, P_0)$ 
is constant.
\end{proposition}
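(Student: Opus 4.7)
The approach is geometric. Encode the $K_0$-grading as an action of $H_0 := \Spec \, \KK[K_0]$ on $\b X := \Spec \, R(A, P_0)$. A $K_0$-homogeneous element $f$ of degree $w \in K_0$ then satisfies $f(h.x) = \chi^w(h) f(x)$; so if $f$ is a unit, $f$ is nowhere zero on $\b X$ and $\chi^w$ must be trivial on the stabilizer $\mathrm{Stab}(x) \subseteq H_0$ of every point $x \in \b X$. The plan is to exhibit enough stabilizers to force $w = 0$, and then to identify the degree-zero part of $R(A,P_0)$ explicitly.

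Since none of the $S_k$ appears in any $g_i$, one has $R(A,P_0) = R_1 \otimes_\KK \KK[S_k]$ with $R_1 := \KK[T_{ij}]/\bangle{g_i}$ an integral domain, so every unit of $R(A,P_0)$ already lies in $R_1$. The relations are equivalent to $T_i^{l_i} = u + (a_1 - a_i)$ for $i = 1, \ldots, r$, where $u := T_1^{l_1}$. Using this I would produce three families of points:
\emph{(a)} for each $(i,j)$, a point $p_{ij} \in \b X$ at which $T_{ij} = 0$ but every other coordinate is nonzero; its stabilizer in $H_0 \subseteq (\KK^*)^{n+m}$ is the group $\mu_{l_{ij}}$ embedded in the $(i,j)$-factor, since the block-$i$ relation collapses to $h_{ij}^{l_{ij}} = 1$;
\emph{(b)} for each block $i$ with $n_i \ge 2$ and each pair $j \ne j'$ in block $i$, a point at which $T_{ij} = T_{ij'} = 0$ and all other coordinates are nonzero; its stabilizer contains the one-parameter subgroup $\{(s^{l_{ij'}}, s^{-l_{ij}}) : s \in \KK^*\}$ in the $(i,j)$- and $(i,j')$-factors;
\emph{(c)} for each $k$, a point at which $S_k = 0$ and all other coordinates are nonzero, whose stabilizer is a full $\KK^*$ in the $k$-factor.
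Existence of each such point is immediate from the explicit form of the relations (e.g.~for $p_{ij}$ with $i \ge 2$ fix $u = a_i - a_1$ and solve $T_{i'}^{l_{i'}} = a_i - a_{i'} \ne 0$ in every other block).

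Now impose $\chi^w = 1$ on each of these stabilizers. Writing $\tilde w \in \ZZ^{n+m}$ for a lift of $w$, (a) gives $l_{ij} \mid \tilde w_{ij}$, (b) gives the proportionality $\tilde w_{ij}/l_{ij} = \tilde w_{ij'}/l_{ij'}$ within every block with $n_i \ge 2$, and (c) gives $\tilde w_k = 0$. Together these say $\tilde w_{ij} = \beta_i l_{ij}$ with $\beta_i$ independent of $j$ (the $n_i = 1$ case is covered by (a) alone) and $\tilde w_k = 0$, i.e.~$\tilde w \in \im(P_0^*)$. Hence $w = 0$ and $f$ lies in $R_0$. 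Finally, the degree-zero monomials of $\KK[T_{ij}, S_k]$ correspond to exponent vectors in $\im(P_0^*) \cap \NN^{n+m}$, which is the $\NN$-span of the rows of $P_0$ and hence generated by the monomials $T_i^{l_i}$; modulo the $g_i$ each $T_i^{l_i}$ equals $u + (a_1 - a_i) \in \KK[u]$, so $R_0 = \KK[u]$. Since $u \colon \b X \to \KK$ is surjective, no non-constant polynomial in $u$ is invertible in $R(A,P_0)$, and we conclude $f \in \KK^*$.

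The decisive step is (b): the single-point constraint of (a) alone only gives $l_{ij} \mid \tilde w_{ij}$, which is too weak to force $w = 0$ since the $\beta_i$'s could still vary within a block. The two-variable stabilizers of (b) are what pin them together, but they require $n_i \ge 2$ and one must verify that the natural candidate subgroup truly sits inside both $H_0$ and the actual stabilizer---both of which follow at once from the fact that the other block products evaluate to $1$ at these points.
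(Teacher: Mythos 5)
Your proof is correct, but it takes a genuinely different route from the paper's. The paper argues purely algebraically: Lemma~\ref{LemmGl} shows that the $g_{ir}$ form a reduced Gr\"obner basis of the ideal of relations; a cancellation argument on a presentation $fg-1=\sum h_ig_i$ then forces a homogeneous unit to have $K_0$-degree zero; finally $f$ and $g$ are rewritten as polynomials in $T_r^{l_r}$ and the Gr\"obner reduction of $fg-1$ yields $fg-1=0$, hence $f,g\in\KK^*$. You instead argue geometrically via the $H_0$-action on $\b{X}=\Spec\,R(A,P_0)$: a nowhere-vanishing homogeneous function has its degree character trivial on every isotropy group, and your three families of points (one $T_{ij}$ zero; two coordinates of the same block zero; one $S_k$ zero) supply enough isotropy to push any lift $\tilde w$ of the degree into $\im(P_0^*)$, i.e.\ $w=0$; the identification $R_0=\KK[u]$ with $u=T_1^{l_1}$ surjective onto $\KK$ then finishes. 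The details check out: the required points exist because the $a_i$ are pairwise distinct, the one-parameter subgroups in your step (b) do lie in $H_0$ and in the isotropy since they only move vanishing coordinates, and $\im(P_0^*)\cap\NN^{n+m}$ is the $\NN$-span of the rows of $P_0$ because the blocks have disjoint support with positive entries. What your approach buys is a transparent conceptual reason for the degree being zero (isotropy of special orbits, close in spirit to the paper's later computation of $V(\b{X};T_{11})$ in Proposition~\ref{prop:VarK0prime}) together with an explicit description of $R_0$; what the paper's buys is the Gr\"obner basis of Lemma~\ref{LemmGl}, which is reused elsewhere (e.g.\ in Proposition~\ref{prop:Gcd}), and an argument that never leaves the coordinate ring.
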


\begin{lemma}
\label{LemmGl}
Notation as for Type 1 in Construction~\ref{constr:RAP0}.
For any two indices $1 \le i,j \le r$, set 
$$
g_{ij} \ := \ T_i^{l_i} - T_j^{l_j} + a_i - a_j.
$$
For any three $1 \le i,j,k \le r$, we have 
$g_{ij} =  g_{ik}  - g_{jk}$
and $G := \{g_{1 r}, \ldots, g_{r-1 \, r}\}$ 
is a reduced Gr\"obner basis with respect to 
the lexicographical ordering
for $\bangle{g_1, \ldots, g_{r-1}}$.
\end{lemma}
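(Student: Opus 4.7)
The plan is to verify the algebraic identity by direct expansion and then deduce the Gr\"obner basis property from a structural observation about the leading monomials.

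First I would establish the identity $g_{ij} = g_{ik} - g_{jk}$ by immediate substitution: writing out
$$
(T_i^{l_i} - T_k^{l_k} + a_i - a_k) - (T_j^{l_j} - T_k^{l_k} + a_j - a_k)
$$
one sees the $T_k^{l_k}$ terms cancel, leaving $T_i^{l_i} - T_j^{l_j} + a_i - a_j = g_{ij}$. With this identity in hand, the ideal equality $\bangle{g_1, \ldots, g_{r-1}} = \bangle{g_{1r}, \ldots, g_{r-1,r}}$ comes from two telescoping arguments. On the one hand, setting $k = r$ in the identity (and noting $g_i = g_{i,i+1}$) yields $g_i = g_{i,r} - g_{i+1,r}$, giving one containment. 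On the other, summing these over $k = i, \ldots, r-1$ yields $g_{ir} = g_i + g_{i+1} + \cdots + g_{r-1}$, giving the other.

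For the Gr\"obner basis claim, I would fix the lexicographic ordering so that variables $T_{ij}$ with smaller $i$ precede those with larger $i$ (the ordering among $S_k$'s and within each block is irrelevant). Then the leading monomial of $g_{ir}$ is $T_i^{l_i} = T_{i1}^{l_{i1}} \cdots T_{in_i}^{l_{in_i}}$ and these leading monomials involve pairwise disjoint sets of variables as $i$ varies over $\{1, \ldots, r-1\}$. Because any two such leading monomials are coprime, Buchberger's first criterion (the coprime leading term criterion) applies and each $S$-polynomial $S(g_{ir}, g_{jr})$ reduces to zero modulo $G$. Hence $G$ is a Gr\"obner basis for the ideal.

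Reducedness is then a short check: every $g_{ir}$ has leading coefficient $1$, and its non-leading terms are $-T_r^{l_r}$ and the constant $a_i - a_r$. Neither of these is divisible by any leading monomial $T_j^{l_j}$ with $j \ne i$, since such a monomial involves the variables $T_{j1}, \ldots, T_{jn_j}$ which do not occur in $T_r^{l_r}$ (as $j \ne r$) nor in the constant. There is no genuine obstacle to the argument, since the lemma is combinatorial in nature; the only point requiring care is to specify the variable order so that $T_i^{l_i}$ and not $T_r^{l_r}$ becomes the leading term of $g_{ir}$, after which the coprime-leading-term criterion does all the work.
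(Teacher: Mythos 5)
Your proof is correct and follows the same overall structure as the paper's: check the identity $g_{ij}=g_{ik}-g_{jk}$ by expansion, use $g_i = g_{i\,i+1}$ to see that $G$ and $g_1,\ldots,g_{r-1}$ generate the same ideal, and then settle the Gr\"obner property via the $S$-polynomials. The only genuine difference is in that last step: the paper computes each $S$-polynomial of $G$ explicitly and exhibits it as $g_{ir}(T_r^{l_r}+\alpha_{jr}) - g_{jr}(T_r^{l_r}+\alpha_{ir})$ with $\alpha_{ij}:=a_j-a_i$, which is a standard representation and hence shows reduction to zero, whereas you invoke Buchberger's first criterion, using that the leading monomials $T_i^{l_i}$ and $T_j^{l_j}$ are supported on disjoint sets of variables and are therefore coprime. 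Both arguments are valid; yours is slightly more conceptual and skips the computation, while the paper's is self-contained. You also make explicit two points the paper glosses over --- the choice of variable order ensuring that $T_i^{l_i}$ rather than $T_r^{l_r}$ is the leading monomial of $g_{ir}$, and the divisibility check behind reducedness --- and both of these additions are correct.
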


\begin{proof}
The identities among the $g_{ij}$ are obvious.
Since $g_i = g_{i\, i+1}$ holds, we see that $G$ 
generates $\bangle{g_1, \ldots, g_{r-1}}$.
With $\alpha_{ij} := a_j - a_i$, the $S$-polynomials 
of $G$ are of the form
$$
T_i^{l_i}  T_r^{l_r} -T_j^{l_j}  T_r^{l_r} +T_i^{l_i} 
\alpha_{jr} -  T_j^{l_j}  \alpha_{ir}
\ = \
g_{ir} (T_r^{l_r} + \alpha_{jr}) - g_{jr}(T_r^{l_r} + \alpha_{ir}).
$$
In particular, they all reduce to zero with 
respect to $G$ and thus $G$ is the desired 
Gr\"obner basis for $\bangle{g_1, \ldots, g_{r-1}}$.
Obviously $G$ is reduced.
\end{proof}

\begin{proof}[Proof of Proposition~\ref{prop:hominv2const}]
Let $f \in \KK[T_{ij},S_k]$ define a $K_0$-homogeneous unit
in $R(A,P_0)$ with inverse defined by $g \in \KK[T_{ij},S_k]$.
We first show that $f$ and hence $g$ is of $K_0$-degree 
zero.
We have a presentation
$$
fg - 1 
\ = \ 
\sum_{i = 1}^{r-1} h_i g_i, \qquad h_i \in \KK[T_{ij},S_k].
$$
Suppose that $f$ is of nonzero $K_0$-degree.
Then $g$ is so and the constant term of 
$fg -1$ equals $-1$. 
Thus, at least one of $h_i$ must have 
a nonzero constant term and we may 
rewrite the presentation as
$$
fg - 1 
\ = \ 
\sum_{i = 1}^{r-1} \t{h_i} g_i + \beta_i g_i
\ = \ 
\sum_{i = 1}^{r-1} \t{h_i} g_i 
+ \beta_i(T_i^{l_i} - T_{i+1}^{l_{i+1}} -\alpha_{i \, i+1}),
$$
where the $h_i\in \KK[T_{ij},S_k]$ have constant term zero
and at least one $\beta_i$ is nonzero. 
Adding 1 to the left and the right hand side gives
$$
fg 
\ = \ 
\sum_{i = 1}^{r-1} \t{h_i} g_i + \beta_i(T_i^{l_i} - T_{i+1}^{l_{i+1}}).
$$
By Lemma~\ref{LemmGl}, at least two 
different $T_j^{l_j},T_k^{l_k}$
are not cancelled in the right hand side.
Consider monomials $f_j,f_k$ of $f$ 
dividing $T_j^{l_j},T_k^{l_k}$ respectively.
Since $f$ is $K_0$-homogeneous, the exponents of $f_j$ and $f_k$ 
differ by an element of the row lattice of~$P_0$.
This works only for $f_j=1$ or $f_j = T_j^{l_j}$.
We conclude that $f$ and hence $g$ is of $K_0$-degree 
zero; a contradiction.

Having seen that $f$ and $g$ are of $K_0$-degree zero,
we conclude that they are polynomials in the $T_i^{l_i}$.
Using the structure of the $g_{ir}$ we may bring the 
representatives $f$ and $g$ in the form 
$$
f 
\ = \ 
\sum_i \lambda_i (T_r^{l_r})^i, 
\qquad\qquad 
g \ = \ \sum_j \kappa_j (T_r^{l_r})^j.
$$
Then also $fg -1$ is a polynomial in $T_r^{l_r}$. 
Since $fg - 1$ belongs to $\bangle{g_{1 \, r}, \ldots, g_{r-1 \, r}}$,
its reduction by the set $G$ of Lemma~\ref{LemmGl}
equals zero. 
This means $fg-1 = 0$ and thus $f,g \in \KK^*$.
\end{proof}

\begin{proposition}
\label{prop:RAP0integral}
Let $R(A,P_0)$ be a $\KK$-algebra of Type~1 as in 
Construction~\ref{constr:RAP0}.
Then $R(A,P_0)$ is an integral, regular complete 
intersection of dimension $n+m-r+1$. 
Moreover the $K_0$-grading of $R(A,P_0)$ is 
effective and of complexity one.
\end{proposition}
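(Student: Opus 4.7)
The plan is to split the four claims into a structural part and a substantive part. The complete-intersection/dimension, effectiveness, and complexity-one assertions follow formally from Lemma~\ref{LemmGl} and the block-diagonal shape of $P_0$; integrality, on the other hand, requires real work, and I would attack it by induction on $r$.

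For the structural part: Lemma~\ref{LemmGl} provides $\{g_{1r},\dots,g_{r-1,r}\}$ as a reduced Gr\"obner basis of $\langle g_1,\dots,g_{r-1}\rangle$ whose leading monomials $T_1^{l_1},\dots,T_{r-1}^{l_{r-1}}$ live in pairwise disjoint blocks of variables and are therefore pairwise coprime. The standard fact that polynomials with pairwise coprime leading monomials form a regular sequence then yields that $g_1,\dots,g_{r-1}$ is a regular sequence in $\KK[T_{ij},S_k]$, so $R(A,P_0)$ is a complete intersection and $\dim R(A,P_0)=(n+m)-(r-1)=n+m-r+1$. The $K_0$-grading is effective because the projection $Q\colon\ZZ^{n+m}\to K_0$ is surjective and the images $Q(e_{ij}),Q(e_k)$, which are the degrees of the variables, generate $K_0$ as a group. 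Finally, the block-diagonal shape of $P_0$ with $r$ nonzero row blocks $l_i$ gives $\mathrm{rk}(P_0)=r$, hence $\mathrm{rk}(K_0)=n+m-r$, and $\dim R(A,P_0)-\mathrm{rk}(K_0)=1$, proving complexity one.

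For integrality I would proceed by induction on $r$, the base case $r=1$ being the polynomial ring $\KK[T_{1j},S_k]$. For the inductive step, Lemma~\ref{LemmGl} allows one to replace the $g_i$ by the equivalent family $\{g_{ij}\}$, and eliminating the common value $u:=T_1^{l_1}-a_1$ rewrites the defining ideal as $\langle T_i^{l_i}-\beta_i:i=2,\dots,r\rangle$ with $\beta_i:=T_1^{l_1}+(a_i-a_1)\in\KK[T_{1j}]$. This exhibits $R(A,P_0)$ as the top of a tower $R_1\subseteq R_2\subseteq\dots\subseteq R_r=R(A,P_0)$, where $R_1:=\KK[T_{1j},S_k]$ and each $R_i:=R_{i-1}[T_{i1},\dots,T_{in_i}]/\langle T_i^{l_i}-\beta_i\rangle$ adjoins a single block of variables with one monomial relation. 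It suffices to show that at each step the relation generates a prime ideal.

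This primality step is the main obstacle. The approach I would take is to localize $R_{i-1}[T_{i1},\dots,T_{in_i}]$ at the monomial $T_{i2}^{l_{i2}}\cdots T_{in_i}^{l_{in_i}}$ --- a nonzerodivisor since $R_{i-1}$ is inductively a domain --- so that the relation takes the Kummer-type form $T_{i1}^{l_{i1}}=w$ over the field $L:=\mathrm{Frac}(R_{i-1})(T_{i2},\dots,T_{in_i})$. The Vahlen-Capelli criterion then reduces irreducibility to the assertion that $w$ is not a $p$-th power in $L$ for any prime $p\mid l_{i1}$ (with the usual extra condition when $4\mid l_{i1}$). The delicate point is to rule this out even though $L$ already contains certain radicals of $\beta_2,\dots,\beta_{i-1}$ coming from earlier inductive steps. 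Here the crucial input is the assumption that the $a_i$ are pairwise distinct, which keeps every $\beta_i$ squarefree in $\KK[T_{1j}]$ and the $\beta_i$ pairwise non-associate modulo $p$-th powers, preventing accidental coincidences in the tower.
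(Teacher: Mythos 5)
Your structural claims (complete intersection, dimension, effectiveness, complexity one) are fine and essentially match what the paper does, but the proposal has two genuine gaps. First, the statement asserts that $R(A,P_0)$ is a \emph{regular} complete intersection, and you never address regularity. The paper obtains it from a direct Jacobian computation: the Jacobian of $g_1,\ldots,g_{r-1}$ is a bidiagonal block matrix built from nonzero multiples of the gradients of the $T_i^{l_i}$, and a rank drop at a point $z$ forces $T_i^{l_i}(z)=T_j^{l_j}(z)=0$ for two distinct indices, contradicting $g_{ij}(z)=0$ since $a_i\ne a_j$. This one computation yields smoothness of $\b{X}=V(g_1,\ldots,g_{r-1})$ and that the $g_i$ generate the full vanishing ideal; combined with Lemma~\ref{LemmIrr} (connectedness of an affine quasitorus variety with only constant invertible homogeneous functions, fed by Proposition~\ref{prop:hominv2const}), smoothness then upgrades connected to irreducible. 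So the paper's route to integrality is ``smooth plus connected implies irreducible'' --- a different and much shorter mechanism than yours, and one that delivers the missing regularity for free.

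Second, the crux of your integrality induction is announced rather than proved. Even granting the reduction to the Kummer equation $T_{i1}^{l_{i1}}=w$ over $L=\mathrm{Frac}(R_{i-1})(T_{i2},\ldots,T_{in_i})$ --- which itself needs the extra check that $T_{i2}\cdots T_{in_i}$ is a nonzerodivisor \emph{modulo} $\bangle{T_i^{l_i}-\beta_i}$, not merely in the polynomial ring over the domain $R_{i-1}$ --- the Vahlen--Capelli criterion requires $\beta_i\notin F^{*p}$, where $F=\mathrm{Frac}(R_{i-1})$ is a tower of radical extensions of $\KK(T_{1j},S_k)$ obtained by adjoining transcendentals and roots of the elements $\beta_s/\prod_{j\ge 2}T_{sj}^{l_{sj}}$ for $s<i$. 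You correctly identify the relevant input (the $\beta_s=T_1^{l_1}+a_1-a_s$ are squarefree and pairwise coprime because the $a_s$ are distinct), but turning this into the needed statement --- that no product $\beta_i\prod_s\beta_s^{e_s}\prod_{s,j}T_{sj}^{f_{sj}}$ in which $\beta_i$ occurs with exponent one is a $p$-th power, and that the $p$-th power classes of the whole radical tower are controlled by such products --- is precisely the Kummer-theoretic work that is missing. I believe the argument can be completed along these lines, but as written the key step is a plausibility claim (``preventing accidental coincidences''), not a proof, so the proposal does not yet establish integrality.
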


\begin{lemma}
\label{LemmIrr}
Let $G$ be a quasitorus and $X$ a (normal) 
affine $G$-variety with only constant invertible 
homogeneous functions. 
Then $X$ is connected (irreducible).
\end{lemma}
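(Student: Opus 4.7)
The plan is to prove connectedness first by contradiction, and then deduce irreducibility in the normal case from standard properties of normal schemes. To begin, I assume $X = X_1 \sqcup \ldots \sqcup X_k$ with $k \ge 2$ connected components and let $e_i \in R$ denote the corresponding primitive idempotents, where $R$ is the coordinate ring of $X$. Since the identity component $G^0 \subseteq G$ is a torus, hence connected, it preserves each $X_i$ and fixes each $e_i$. Thus the finite abelian quotient $\Delta := G/G^0$ acts by permutation on $\{X_1,\ldots,X_k\}$ and via $\delta \cdot e_i = e_{\sigma_\delta(i)}$ on the $e_i$. In both of the two cases distinguished below, the strategy is to exhibit a nonconstant $K$-homogeneous unit in $R$, contradicting the hypothesis.

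If this $\Delta$-action is not transitive, I would pick any nonempty proper $\Delta$-orbit $O \subsetneq \{X_1,\ldots,X_k\}$ and set $e := \sum_{X_i \in O} e_i$. Then $e$ is $\Delta$-invariant and $G^0$-fixed, hence $G$-invariant, so $e \in R_0$. Since $0 \ne e \ne 1$, the element $1 + e$ is a $K$-homogeneous unit with inverse $1 - e/2$ that is nonconstant, contradicting the hypothesis.

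If instead $\Delta$ acts transitively, I would fix $X_1$ and let $H := \mathrm{Stab}_\Delta(X_1)$. Since $|\Delta/H| = k \ge 2$ and $\Delta/H$ is a nontrivial finite abelian group, one can choose a nontrivial character $\chi \colon \Delta \to \KK^*$ with $\chi|_H = 1$. Picking $\delta_i \in \Delta$ with $\delta_i \cdot X_1 = X_i$, I would set $c_i := \chi(\delta_i)$, which is well-defined because $\chi$ is trivial on $H$, and form $u := \sum_i c_i e_i \in R$. Each $c_i$ is a root of unity, so $u$ is a unit with inverse $\sum_i c_i^{-1} e_i$, and the identity $\delta_i \delta_{\sigma_\delta(i)}^{-1} \in \delta^{-1} H$ yields $\delta \cdot u = \chi(\delta)^{-1} u$ for every $\delta \in \Delta$. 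Hence $u$ lies in some $R_w$ with $w \in K_{\mathrm{tors}}$ corresponding to $\chi^{-1} \ne 1$, so $u$ is $K$-homogeneous of nonzero degree and therefore nonconstant, again contradicting the hypothesis. I expect this transitive case to be the main technical point, since it requires both the existence of $\chi$ and the verification that the resulting $u$ is genuinely a $\Delta$-eigenvector with all coefficients in $\KK^*$.

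For the parenthetical irreducibility claim, I would invoke that a Noetherian normal local ring is a domain: under the normality hypothesis, each point of $X$ therefore lies on a unique irreducible component, so the irreducible components of $X$ are pairwise disjoint as topological subsets. The connectedness just established then forces $X$ to have exactly one irreducible component, i.e., $X$ is irreducible.
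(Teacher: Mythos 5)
Your proof is correct and follows essentially the same route as the paper: the paper likewise first rules out a non-transitive action on the set of components (disjoint invariant open sets give nonconstant degree-zero units) and then, in the transitive case, uses characters of $G$ modulo the stabilizer of a component to build homogeneous invertible functions of nonzero degree; your $u=\sum_i c_i e_i$ is exactly the paper's $f_\chi$ written out in terms of idempotents. The concluding deduction of irreducibility from normality is also the paper's (implicit) argument.
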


\begin{proof}
Consider the induced action of $G$ on the set 
$Y = \{X_1, \ldots, X_r\}$ of connected components.
Then $Y$ is a single $G$-orbit, because otherwise 
we can write $X$ is a union of disjoint open 
$G$-invariant sets which in turn yields 
nonconstant invertible 
functions on $X$.
The stabilizer $G_1 \subseteq G$ of $X_1 \in Y$
is a closed subgroup and we have the 
homomorphism $\pi \colon G \to G/G_1$.
Write $X_i = g_i \cdot X_1$ with suitable 
$g_1, \ldots, g_r \in G$.
Then, for every character $\chi$ on $G/G_1$, 
we obtain an invertible regular function
$f_\chi$ on $X$ sending $x \in g_i \cdot X_1$ 
to $\chi(\pi(g_i))$.
By construction, $f_\chi$ is homogeneous
with respect to $\chi$.
Thus every $f_\chi$ is constant, which means
$G = G_1$ and thus $X = X_1$.
\end{proof}

\begin{proof}[Proof of Proposition~\ref{prop:RAP0integral}]
Consider $\b{X} := V(g_1,\ldots, g_{r-1}) \subseteq \KK^{m+n}$.
We first show that for every $z \in \b{X}$ 
the Jacobian of $g_1,\ldots, g_{r-1}$ 
is of full rank.
The Jacobian is of the form $(J_g,0)$ with
$$
J_g 
\ := \ 
\begin{bmatrix}
 \delta_{1 \ 1} & \delta_{1\ 2} & 0 && & \cdots && & 0\\
0 & \delta_{2\ 2} & \delta_{1\ 3} & 0  &&& & & \\
\vdots & & & & \vdots&&& & \vdots\\
 & && & &0 &  \delta_{2\ r-2} & \delta_{1\ r-1} & 0\\
0 & &\cdots & && &0  & \delta_{2\ r-1} & \delta_{1\ r} \\
\end{bmatrix}
$$
where each $\delta_{ti}$ is a nonzero multiple 
of $\delta_i:= \grad \ T_i^{l_i}$. 
Let $z \in \KK^{m+n}$ be any point with $J_g(z)$ 
not of full rank. 
Then $\delta_i(z) = \delta_j(z) = 0$ for some 
$i \neq j$. 
This implies ${z_{ik} = 0 =z_{jl}}$ for some 
$0 \leq k  \leq n_i$, $0 \leq l \leq n_j$. 
It follows $T_i^{l_i}(z) = T_j^{l_j}(z) = 0$ and 
thus $z \notin \overline{X}$. 
So the Jacobian is of full rank for any 
$z \in \overline{X}$.

We conclude that $g_1,\ldots, g_{r-1}$ generate
the vanishing ideal of $\b{X}$ and that 
$\b{X} = \Spec \, R(A,P_0)$ is smooth. 
Lemma~\ref{LemmIrr} yields that 
$\b{X}$ is connected
and, by smoothness, irreducible.
Thus $R(A,P_0)$ is integral.
Moreover the dimension of $\overline{X}$ 
and hence $R(A,P_0)$ is $n+m-(r-1)$ 
and thus $R(A,P_0)$ is a complete intersection.
\end{proof}

\begin{proposition}
\label{prop:VarK0prime}
Let $R(A,P_0)$ be of Type~1. 
Then the variables $T_{ij}, S_k$ define 
pairwise nonassociated $K_0$-prime 
elements in $R(A,P_0)$. 
If furthermore the ring $R(A,P_0)$ is 
factorial and $n_il_{ij}>1$ holds, 
then $T_{ij}$ is even prime.
\end{proposition}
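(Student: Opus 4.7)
The proposition comprises three assertions---$K_0$-primality, pairwise non-association, and primality of $T_{ij}$ under the ``furthermore'' hypotheses---which I treat in that order.

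For the variables $S_k$ the argument is direct: none of the $g_i$ involves any $S$-variable, hence $R(A,P_0)/(S_k) \cong R(A,P_0')$, with $P_0'$ obtained from $P_0$ by deleting the $S_k$-column. Proposition~\ref{prop:RAP0integral} applied to $P_0'$ exhibits this quotient as an integral domain, so $S_k$ is already a prime element of $R(A,P_0)$. The substantial work is for $T_{ij}$. My plan is to describe the closed subscheme $\bar X' := \V(T_{ij}) \cap \Spec R(A,P_0)$ explicitly and show that the quasitorus $H_0 := \Spec \KK[K_0]$ acts transitively on its irreducible components. Reducing the Gr\"obner basis of Lemma~\ref{LemmGl} modulo $T_{ij}$---which forces $T_i^{l_i} = 0$---collapses the defining ideal to $\langle T_k^{l_k} - (a_i - a_k) : k \neq i \rangle$, so $\bar X'$ factors as $\prod_{k \neq i} Y_k \times \KK^{n_i - 1 + m}$ with $Y_k := \V(T_k^{l_k} - c_k) \subseteq \KK^{n_k}$ and $c_k := a_i - a_k \in \KK^*$. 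Setting $d_k := \gcd(l_{k1},\ldots,l_{kn_k})$ and $M_k := \prod_{j'} T_{kj'}^{l_{kj'}/d_k}$, the identity $T_k^{l_k} - c_k = \prod_{\zeta^{d_k} = c_k}(M_k - \zeta)$ presents $Y_k$ as a disjoint union of $d_k$ smooth irreducible hypersurfaces.

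The core of the argument is the transitivity of $H_0$ on the $\prod_{k \neq i} d_k$ components of $\bar X'$. I would realize $H_0$ as the kernel of the homomorphism $(\KK^*)^{n+m} \to (\KK^*)^r$ given by $P_0$, namely the tuples $(t_{ij'}, s_k)$ with $\prod_{j'} t_{kj'}^{l_{kj'}} = 1$ for every $k$. Such a tuple sends the component indexed by $(\zeta_k)_{k \neq i}$ to the one indexed by $(\zeta_k / \mu_k)$, where $\mu_k := \prod_{j'} t_{kj'}^{l_{kj'}/d_k}$ is necessarily a $d_k$-th root of unity. Choosing, independently for each $k \neq i$, $t_{kj'} = 1$ for $j' \geq 2$ and letting $t_{k1}$ range over all $l_{k1}$-th roots of unity realizes every element of $\mu_{d_k}$, so the character map $H_0 \to \prod_{k \neq i} \mu_{d_k}$ is surjective and the action transitive. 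Hence $R(A,P_0)/(T_{ij})$ is a $K_0$-graded domain, and $T_{ij}$ is $K_0$-prime. Pairwise non-association is then a consequence of the fact that only constants are homogeneous units (Proposition~\ref{prop:hominv2const}): an associator would force a linear relation $T_{ij} - c T_{i'j'} \in \langle g_1,\ldots,g_{r-1}\rangle$ (or $T_{ij} - c S_k$), which is incompatible with the leading-monomial structure of the Gr\"obner basis of Lemma~\ref{LemmGl}.

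For the ``furthermore'' clause, suppose $R(A,P_0)$ is factorial and $n_i l_{ij} > 1$. By Theorem~\ref{Theorem1}(iv)(a)---to be established in the subsequent Proposition~\ref{prop:typ1fact}---factoriality forces $\gcd(l_{k1},\ldots,l_{kn_k}) = 1$ for every $k$, so every $d_k$ equals $1$ and the decomposition of $\bar X'$ collapses to a single smooth irreducible component; thus $R(A,P_0)/(T_{ij})$ is an integral domain and $T_{ij}$ is prime. The genuinely delicate point in the whole argument is the surjectivity of the character map $H_0 \to \prod_{k \neq i} \mu_{d_k}$ underlying the transitivity step; everything else is a straightforward consequence of Lemma~\ref{LemmGl} and the preceding propositions.
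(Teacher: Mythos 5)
Your treatment of the two main assertions is correct and follows essentially the same geometric strategy as the paper: describe $V(T_{ij})\cap\overline{X}$ explicitly and verify that $H_0$ permutes its prime components transitively and with multiplicity one. The difference is one of bookkeeping. The paper exhibits $V(\overline{X}_h;T_{11})$ as a single $H_0$-orbit by comparing two arbitrary solutions of $T_{11}=0$, $T_s^{l_s}=a_s-a_1$ with the remaining coordinates nonzero, whereas you decompose the zero set into the $\prod_{k\neq i}d_k$ components cut out by $M_k=\zeta$ and check transitivity by a character computation; your version has the bonus of producing the component count that the paper only extracts later in Proposition~\ref{prop:Gcd}. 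Your non-association argument (constant homogeneous units plus the Gr\"obner basis of Lemma~\ref{LemmGl}) also works, though the paper's observation that any two variables have common zero set of codimension two in $\overline{X}$ is quicker. One phrase needs repair: $R(A,P_0)/\langle T_{ij}\rangle$ is \emph{not} a domain when $\prod_{k\neq i} d_k>1$; what your argument actually shows is that this quotient has no homogeneous zero divisors, which is the correct formulation of $K_0$-primality.

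The supplement is where you have a genuine gap: your proof is circular. You derive $\gcd(l_{k1},\ldots,l_{kn_k})=1$ from the implication ``factorial $\Rightarrow$ gcd conditions'' of Proposition~\ref{prop:typ1fact}, but the paper's proof of exactly that implication begins ``If $R(A,P_0)$ is factorial, then Proposition~\ref{prop:VarK0prime} says that the generators $T_{ij}$ are prime,'' i.e.\ it invokes the very supplement you are trying to prove and then concludes via Proposition~\ref{prop:Gcd}. No independent proof of ``factorial $\Rightarrow$ all gcd's equal $1$'' is available at this point. Note also that your component count alone cannot close the gap: factoriality only gives $T_{ij}=u\,p_1\cdots p_s$ with $s=\prod_{k\neq i}d_k$ prime factors, and nothing said so far forbids $s>1$ in a UFD. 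The hypothesis $n_il_{ij}>1$ must enter through a separate argument; the paper imports it from \cite[Prop.~3.4.2.8(ii)]{ArDeHaLa}, which exploits the fact that the $s$ prime factors are $H_0$-conjugate up to scalars and compares their degrees. You need either to reproduce that argument or to establish ``factorial $\Rightarrow$ gcd $=1$'' independently before you may use it here.
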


\begin{proof}
First observe that, by the nature of 
relations, any two different variables 
define a zero set of codimension at least 
two in $\b{X}$.
Thus the variables are pairwise 
non-associated.
Since $R(A,P_0)$ is integral and 
$R(A,P_0) \cong R(A,P_0')[S_1,\ldots, S_m]$ 
holds with~$P_0'$ obtained from $P_0$ 
by deleting the zero columns,
the $S_k$ are even prime. 

We now turn to the $T_{ij}$ and exemplarily 
treat $T_{11}$.
The task is to show that the divisor 
of $T_{11}$ in $\b{X}$ is $H_0$-prime
that means that its prime components have 
multiplicity one and are transitively permuted 
by $H_0$. 
First we claim
$$
V(\b{X};  T_{11})
\ := \ 
V(T_{11}) \cap \b{X}
\ = \ 
\b{H_0 \cdot z}
\ \subseteq \
\KK^{n+m}.
$$
Indeed, the zero set of $T_{11}$ in $\overline{X}$ 
is given by the equations 
$$
T_{11} \ = \ 0, 
\qquad 
T_s^{l_s} \ = \ a_s-a_1, 
\qquad 
2 \leq s \leq r.
$$
Set 
$h := T_{12} \cdots T_{rn_r} \cdot S_1 \cdots S_m$ 
and let $z \in \KK^{n+m}_h$ be a point 
satisfying the above equations. 
Then $z$ is of the form 
$(0,z_{12}, \ldots, z_{rn_r}, z_1, \ldots, z_m)$ 
with nonzero $z_{ij}$ and $z_k$ 
and any other such point $z' \in \KK_h^{n+m}$ 
is given as
$$
z' 
= 
t \cdot z 
= 
(0,t_{12}z_{12},\ldots,t_{rn_r}z_{rn_r},t_1z_1, \ldots, t_mz_m), 
\quad
t \in (\KK^*)^{n+m}, 
\quad
t_s^{l_s}=1.
$$
This means $t \in H_0$ and $V(\b{X}_h; T_{11}) = H_0 \cdot z$. 
Since the common zero set of any two different variables
is of codimension at least two in $\overline{X}$,
our claim follows.
In particular, $H_0$ permutes transitively the 
components of the divisor defined by 
$T_{11}$ on $\b{X}$.
To obtain only multiplicities one, observe 
that the Jacobian of the above equations is of 
full rank at any point of 
$H_0 \cdot z \subseteq V(\b{X}; T_{11})$. 

To prove the supplement, one proceeds by exactly the 
same arguments as used in the proof 
of~\cite[Prop.~3.4.2.8(ii)]{ArDeHaLa}.
\end{proof}

\begin{proposition}
Let $R(A,P_0)$ be of Type~1. 
Then $R(A,P_0)$ is $K_0$-factorial.
\end{proposition}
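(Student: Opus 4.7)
The plan is to work geometrically on $\b{X} := \Spec \, R(A,P_0)$ with its quasitorus action by $H_0 := \Spec\, \KK[K_0]$, and to exhibit every non-zero $K_0$-homogeneous element as a product of $K_0$-primes. The $K_0$-primes will come in two flavors: the variables $T_{ij}$, $S_k$, already shown to be $K_0$-prime in Proposition~\ref{prop:VarK0prime}, and polynomials of the form $u - c$ with $u := T_r^{l_r}$ and $c \in \KK$.

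First I would compute the invariants. A $K_0$-degree zero monomial in $\KK[T_{ij}, S_k]$ has exponent in the row lattice of $P_0$; for Type~1 this lattice is spanned by the $l_i$, so on the non-negative cone such monomials are products of the $T_i^{l_i}$. Modulo the relations $g_i$ they get identified with polynomials in $u$, hence $R(A,P_0)_0 = \KK[u]$ and one obtains an invariant quotient morphism $\pi : \b{X} \to \mathbb{A}^1$, $z \mapsto T_r^{l_r}(z)$. Every fiber $\b{X}_c := \pi^{-1}(c)$ has dimension $n+m-r = \dim H_0$, so using the fact that an $H_0$-orbit has no non-trivial proper $H_0$-invariant subsets one concludes that every $H_0$-invariant prime divisor of $\b{X}$ is contained in a single fiber. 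For $c \in \{a_i - a_r : 1 \le i \le r\}$ the fiber equals $\bigcup_j V(\b{X}; T_{ij})$ for the appropriate $i$, whose components are the $K_0$-prime divisors of the variables already described. For any other $c$, the fiber lies in $\b{X} \cap (\KK^*)^{n+m}$ and is a single $H_0$-orbit, because the big-torus quotient $(\KK^*)^{n+m} \to (\KK^*)^r$ is a principal $H_0$-bundle whose fibers intersect $\b{X}$ precisely in the $\b{X}_c$; a Jacobian check shows that $u-c$ vanishes to order one along each component, so $\div(u - c) = \b{X}_c$.

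The main technical point is verifying that $u - c$ is $K_0$-prime for these generic $c$. Since $\b{X}_c$ may split into several components permuted non-trivially by the torsion part of $H_0$, the ideal $(u-c) \subset R(A,P_0)$ need not be a prime ideal. The key observation is that any $K_0$-homogeneous element is $H_0$-semi-invariant, so its vanishing order along the components of an $H_0$-orbit of prime divisors is constant on the orbit; this common value gives a well-defined additive valuation on $K_0$-homogeneous elements, from which $K_0$-primality of $u - c$ follows at once. With this in hand, the proof concludes as follows: given a non-zero $K_0$-homogeneous $f \in R(A,P_0)$, write
$$
\div(f) \ = \ \sum n_{ij} V(\b{X}; T_{ij}) \, + \, \sum m_k V(\b{X}; S_k) \, + \, \sum_c d_c \b{X}_c
$$
and set $h := \prod T_{ij}^{n_{ij}} \prod S_k^{m_k} \prod_c (u - c)^{d_c}$. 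Then $h$ and $f$ have the same divisor and the same $K_0$-degree, so by normality of $\b{X}$ (Proposition~\ref{prop:RAP0integral}) the ratio $f/h$ is a $K_0$-homogeneous unit in $R(A,P_0)$, forced to be a scalar by Proposition~\ref{prop:hominv2const}. This exhibits $f$ as a product of $K_0$-primes.
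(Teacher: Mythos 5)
Your argument is correct in substance but takes a genuinely different route from the paper's. The paper's proof is a reduction to a localization criterion: it identifies $H_0$ with the kernel of $\varphi\colon\TT^{n+m}\to\TT^r$, $(t_{ij},t_k)\mapsto(t_1^{l_1},\dots,t_r^{l_r})$, observes that $(R(A,P_0)_t)_0$ (with $t$ the product of all variables) is the coordinate ring of an $r$-times punctured affine line and hence factorial, and then concludes via Proposition~\ref{prop:VarK0prime} and \cite[Cor.~3.4.1.6]{ArDeHaLa}. You instead unwind that criterion by hand: you compute $R(A,P_0)_0=\KK[u]$ with $u=T_r^{l_r}$, exhibit an explicit supply of $K_0$-primes (the variables, plus $u-c$ for non-special $c$), and factor an arbitrary homogeneous element by comparing divisors on the normal variety $\b{X}$ and killing the resulting homogeneous unit with Proposition~\ref{prop:hominv2const}. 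What your version buys is a self-contained, geometrically transparent proof with no black-box citation; what the paper's version buys is brevity and reuse of general machinery. The underlying mechanism (the quotient of the big-torus part of $\b{X}$ is a punctured line, whose points give the remaining primes) is the same in both.

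Two inaccuracies in your write-up, both confined to the case $m>0$ and neither fatal. First, the divisors $V(S_k)$ are $H_0$-invariant prime divisors on which $u$ is non-constant, so the claim that \emph{every} $H_0$-invariant prime divisor lies in a fiber of $\pi$ is false as stated; it holds for those prime divisors meeting the locus where all coordinates are nonzero, and the $V(T_{ij})$- and $V(S_k)$-components must be handled separately --- as your final divisor decomposition in fact does. Second, for non-special $c$ the fiber $\b{X}_c$ contains points with some $S_k=0$, so it is the \emph{closure} of a single $H_0$-orbit rather than a single orbit; the transitivity of $H_0$ on its components and the multiplicity-one statement you need survive unchanged. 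One could also note that the components of $\div(f)$ are a priori only invariant under the identity component $H_0^0$, but since $H_0^0$ already has dimension $n+m-r$, your dimension count goes through verbatim.
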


\begin{proof}
First observe that the quasitorus 
$H_0 \cong \Spec \, \KK[K_0]$ 
equals the kernel of the homomorphism of tori
$$
\varphi\colon \TT^{n+m} \ \to \ \TT^r,
\qquad 
(t_{ij}, t_k) \ \mapsto \ (t_1^{l_1}, \ldots, t_r^{l_r}).
$$
Denote the coordinates of $\TT^r$ by $U_1,\ldots, U_r$.
Then the relations $g_i$ are pullbacks of the affine linear 
forms
$$
g_i 
\ = \ 
\varphi^*(h_i), 
\qquad 
h_i 
\ := \ 
U_i - U_{i+1} - (a_{i+1}-a_i) 
\ \in \ 
\KK[U_1^{\pm}, \ldots, U_r^{\pm 1}].
$$
The $h_i$ generate the vanishing ideal of an $r$ times 
punctured affine line in $\TT^r$ and thus
$$
(R(A,P_0)_t)_0
\ = \ 
\KK[U_1^{\pm}, \ldots, U_r^{\pm 1}] 
/ 
\bangle{h_1, \ldots, h_{r-1}}
$$ 
is a factorial ring, where $t$ is the product over 
all the variables $T_{ij}$ and $S_k$.
Now Proposition~\ref{prop:VarK0prime}
and~\cite[Cor.~3.4.1.6]{ArDeHaLa} tell us
that $R(A,P_0)$ is $K_0$-factorial.
\end{proof}

\begin{proposition}\label{prop:Gcd}
Let $R(A,P_0)$ be of Type~1. 
Then the variable $T_{ij}$ is prime
in $R(A,P_0)$ if and only 
if $1 = \gcd(l_{k1}, \ldots ,l_{kn_k})$ 
holds for all $k \neq i$.
\end{proposition}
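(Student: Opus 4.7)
My approach reduces primality of $T_{ij}$ to the integrality of an explicit tensor product, and from there to a classical binomial-irreducibility lemma. Since $R(A,P_0)$ is a domain by Proposition~\ref{prop:RAP0integral} and $T_{ij}$ is a nonzero non-unit, $T_{ij}$ is prime if and only if $R(A,P_0)/\bangle{T_{ij}}$ is an integral domain.

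The first step is to identify this quotient. Modulo $T_{ij}$ the monomial $T_i^{l_i}$ vanishes, and the relations $g_t = T_t^{l_t} - T_{t+1}^{l_{t+1}} - (a_{t+1}-a_t)$ propagate this both upward and downward, forcing $T_s^{l_s} \equiv a_i - a_s$ for every $s \neq i$. All these constants lie in $\KK^*$ because the $a_k$ are pairwise distinct. The remaining variables $T_{i,j'}$ with $j' \neq j$ and the $S_k$ do not occur in any surviving relation, and the surviving relations involve pairwise disjoint blocks of variables $T_{s1},\ldots,T_{sn_s}$ for distinct $s$. Hence
$$
R(A,P_0)/\bangle{T_{ij}} \;\cong\; \KK[T_{i,j'}, S_k]_{j' \neq j} \;\otimes_{\KK}\; \bigotimes_{s \neq i} \KK[T_{s1},\ldots,T_{sn_s}]/\bangle{T_s^{l_s} - (a_i - a_s)}.
$$

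Over the algebraically closed field $\KK$, a tensor product of finitely generated $\KK$-algebras is a domain if and only if each factor is. The polynomial factor is always a domain, so the question reduces to: for $c \in \KK^*$ and positive integers $b_1,\ldots,b_n$, when is the binomial $T_1^{b_1}\cdots T_n^{b_n} - c$ irreducible in $\KK[T_1,\ldots,T_n]$? The answer is: precisely when $\gcd(b_1,\ldots,b_n) = 1$. For the reducibility direction, if $d := \gcd > 1$, substitute $X = T_1^{b_1/d}\cdots T_n^{b_n/d}$ into the splitting $X^d - c = \prod_{\zeta^d = c}(X - \zeta)$ available over algebraically closed $\KK$ to produce a nontrivial factorization. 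For the irreducibility direction, the vanishing set lies in $(\KK^*)^n$ (since $c \neq 0$ and $b_j \geq 1$) and there coincides with a coset of $\ker((\KK^*)^n \to \KK^*,\; t \mapsto t^b)$; this kernel is a connected subtorus precisely when $\ZZ^n/\ZZ b$ is torsion-free, i.e.\ when $\gcd(b_j) = 1$, so the variety is irreducible. A direct partial-derivative computation shows $T^b - c$ is squarefree (any square factor would divide all partial derivatives, each a monomial, forcing it to divide $c \in \KK^*$), whence its ideal is radical, and an irreducible radical hypersurface is cut out by an irreducible polynomial.

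Combining these steps, $T_{ij}$ is prime in $R(A,P_0)$ if and only if each factor $\KK[T_{s1},\ldots,T_{sn_s}]/\bangle{T_s^{l_s} - (a_i - a_s)}$ with $s \neq i$ is a domain, which by the lemma amounts to $\gcd(l_{s1},\ldots,l_{sn_s}) = 1$ for every $s \neq i$. The principal technical ingredient is the binomial irreducibility lemma; the reduction to the tensor-product form is routine bookkeeping once one unwinds how the chain of relations $g_t$ cascades modulo $T_{ij}$.
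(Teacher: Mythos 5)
Your proof is correct and follows essentially the same route as the paper: both reduce primality of $T_{ij}$ to primality of the ideal $\bangle{T_s^{l_s}-(a_i-a_s);\ s\neq i}$ obtained by cascading the relations $g_{is}$ modulo $T_{ij}$ (using that $T_i^{l_i}\in\bangle{T_{ij}}$ and that these $g_{is}$ generate the relation ideal). The only divergence is at the last step, where the paper asserts that this ideal is prime exactly when the exponent vectors generate a primitive sublattice, whereas you make that step self-contained by splitting the quotient into a tensor product of hypersurface algebras over the algebraically closed field $\KK$ and proving the irreducibility criterion for $T^b-c$ directly.
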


\begin{proof}
We treat exemplarily $T_{11}$. 
By Lemma~\ref{LemmGl}, the ideal 
of relations of $R(A,P_0)$ is 
generated by $g_{1 2}, \ldots, g_{1 r}$.
Thus $T_{11}$ generates a prime ideal 
if and only if the the following ideal
is prime 
$$
\bangle{T_j^{l_j} +a_j -a_1; \ j \neq 1}
\  \subseteq \ 
\KK[T_{ij}; \ (i,j) \neq (1,1)].
$$
This is equivalent to the statement that 
$(l_2, 0, \ldots, 0),\ldots, (0, \ldots, 0, l_r)$ 
generate a primitive sublattice of $\ZZ^{n-n_1}$. 
This in turn holds if and only if $l_{k1}, \ldots ,l_{kn_k}$ 
have greatest common divisor one for all $k \neq 1$.
\end{proof}

\begin{proposition}
\label{prop:typ1fact}
Let $R(A,P_0)$ be of Type~1 and suppose 
that $r\geq 2$ and $n_il_{ij}>1$ hold 
for all $i, j$. 
Then the following statements are equivalent.
\begin{enumerate}
\item
The ring $R(A,P_0)$ is factorial.
\item
The group $K_0$ is torsion free.
\item
We have $\gcd(l_{i1}, \ldots, l_{in_i}) = 1$
for $i = 1, \ldots, r$.
\end{enumerate}
\end{proposition}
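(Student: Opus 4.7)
The plan is to establish the three equivalences by splitting them into the combinatorial equivalence (ii) $\Leftrightarrow$ (iii) and the two implications (i) $\Rightarrow$ (iii) and (iii) $\Rightarrow$ (i).

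For (ii) $\Leftrightarrow$ (iii), I would reduce to Smith normal form. The cokernel $K_0 = \ZZ^{n+m}/\im(P_0^*)$ is torsion free precisely when the greatest common divisor of the $r \times r$ minors of $P_0$ equals one. From the block-diagonal shape of $P_0$, the nonzero $r \times r$ minors are exactly the products $l_{1 j_1} \cdots l_{r j_r}$ obtained by selecting one column from each of the $r$ diagonal blocks, and their collective gcd factors as $\prod_{i=1}^{r} \gcd(l_{i1}, \ldots, l_{in_i})$. This product equals one iff each factor does, yielding the equivalence.

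For (i) $\Rightarrow$ (iii), I would invoke the supplementary clause of Proposition~\ref{prop:VarK0prime}: under $n_i l_{ij}>1$, factoriality of $R(A,P_0)$ upgrades the $K_0$-prime variables $T_{ij}$ to actual primes. Proposition~\ref{prop:Gcd} then characterizes this primality as $\gcd(l_{k1}, \ldots, l_{k n_k}) = 1$ for every $k \ne i$. Since $r \ge 2$, for each index $i$ one may pick some $i' \ne i$, and the primality of $T_{i'1}$ delivers the gcd condition at $k = i$. Hence (iii) holds.

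The implication (iii) $\Rightarrow$ (i) is the substantive step. Under (iii), Proposition~\ref{prop:Gcd} makes each $T_{ij}$ prime in $R(A,P_0)$, and the $S_k$ are always prime by Proposition~\ref{prop:VarK0prime}. Letting $t$ denote the product of all these variables, Nagata's criterion reduces factoriality of $R(A,P_0)$ to factoriality of the localization $R(A,P_0)_t$. To analyze the latter, I would reuse the torsor picture from the proof of $K_0$-factoriality: $\Spec R(A,P_0)_t$ is a principal $H_0$-bundle over the $r$-times punctured affine line $Y \subseteq \TT^r$ cut out by $h_1, \ldots, h_{r-1}$. Invoking (iii) $\Leftrightarrow$ (ii), the group $H_0$ is a torus, so the torsor is classified by $\Pic(Y)^{\dim H_0} = 0$, hence trivial. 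This yields $R(A,P_0)_t \cong \KK[Y] \otimes_{\KK} \KK[H_0]$, a Laurent polynomial ring over a principal ideal domain, which is a UFD. The main obstacle is this last torsor trivialization: it hinges both on the torsion-freeness of $K_0$ (to ensure that $H_0$ is a genuine torus, not merely a quasitorus) and on the vanishing of $\Pic(Y)$ for the punctured line.
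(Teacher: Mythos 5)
Your proof is correct, and it differs from the paper's in one substantive place. The paper closes the equivalences by a cycle (ii)~$\Rightarrow$~(i)~$\Rightarrow$~(iii)~$\Rightarrow$~(ii): for (ii)~$\Rightarrow$~(i) it simply quotes \cite[Thm.~3.4.1.11]{ArDeHaLa} (torsion-free grading group plus the already-established $K_0$-factoriality gives factoriality), for (i)~$\Rightarrow$~(iii) it uses the supplement of Proposition~\ref{prop:VarK0prime} together with Proposition~\ref{prop:Gcd} exactly as you do, and for (iii)~$\Rightarrow$~(ii) it notes that the gcd condition makes the row lattice of $P_0$ primitive. Your treatment of (i)~$\Rightarrow$~(iii) is identical, and your Smith-normal-form computation of the $r\times r$ minors is just a more explicit version of the paper's primitivity remark (and correctly gives both directions of (ii)~$\Leftrightarrow$~(iii), using that the nonzero maximal minors are the products $l_{1j_1}\cdots l_{rj_r}$ and that the gcd of such products factors as the product of the blockwise gcds). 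The genuine divergence is (iii)~$\Rightarrow$~(i): instead of invoking the general theorem on torsion-free factorial gradings, you run Nagata's criterion directly on the prime elements $T_{ij}$, $S_k$ and then trivialize the $H_0$-torsor $\Spec R(A,P_0)_t \to Y$ over the punctured line using $\Pic(Y)=0$ and the fact that $H_0$ is an honest torus under (ii). This is sound; in fact the torsor is even globally trivial because torsion-freeness of $K_0$ splits the exact sequence $0 \to \ZZ^r \to \ZZ^{n+m} \to K_0 \to 0$, so $\TT^{n+m} \cong \TT^r \times H_0$ compatibly with $\varphi$, and you could bypass the Picard-group argument entirely. What your route buys is a self-contained proof that re-derives, in this special case, the content of the cited results \cite[Cor.~3.4.1.6, Thm.~3.4.1.11]{ArDeHaLa}; what the paper's route buys is brevity, since the localization $(R(A,P_0)_t)_0 \cong \KK[Y]$ had already been computed in the proof of $K_0$-factoriality.
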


\begin{proof}
Let $K_0$ be torsion free. 
Then $K_0$-factoriality implies factoriality 
of $R(A,P_0)$, see~\cite[Thm.~3.4.1.11]{ArDeHaLa}. 
If $R(A,P_0)$ is factorial, then 
Proposition~\ref{prop:VarK0prime} says 
that the generators $T_{ij}$ are 
prime.
This implies  
$\gcd(l_{k1}, \ldots ,l_{kn_k})  = 1$ for all 
$k$, see Proposition~\ref{prop:Gcd}. 
If the latter holds, then the rows of $P_0$ 
generate a primitive sublattice of $\ZZ^{n+m}$ 
and thus $K_0$ is torsion free.
\end{proof}

\begin{proof}[Proof of Theorem~\ref{Theorem3}]
We first show that every $K$-homogeneous 
unit $f \in R(A,P)_w$ is constant.
For this, it suffices 
to show that $f$ is $K_0$-homogeneous, 
see Proposition~\ref{prop:hominv2const}.
From~\cite[Rem.~3.4.3.2]{ArDeHaLa} we infer 
that the downgrading map $K_0 \to K$ has 
kernel~$\ZZ^s$.
Consider the inverse $g \in R(A,P)_{-w}$ 
of $f \in R(A,P)_w$
and the decompositions into $K_0$-homogeneous 
parts 
$$
f \ = \ \sum f_i, \quad f_i \in R(A,P_0)_{u_i}, 
\qquad\qquad
g \ = \ \sum g_j, \quad g_j \in R(A,P_0)_{v_j},
$$
where $u_i = w_0 + u_i'$ with $u_i' \in \ZZ^s$ 
and $v_j = -w_0 + v_j'$ with $v_j' \in \ZZ^s$
for some fixed $w_0 \in K_0$ projecting to $w \in K$;
we identify the kernel of $K_0 \rightarrow K$ with
$\ZZ^s$. 
Let $f_{i_0}, f_{i_1}, g_{j_0}, g_{j_1}$ denote the 
terms, where $u_{i_0}'$, $v_{j_0}'$ are minimal 
and $u_{i_1}'$, $v_{j_1}'$ are maximal with 
respect to the lexicographical ordering 
on $\ZZ^s$.
As $1$ is of $K_0$-degree zero, we obtain
$$
0 
\ = \ 
\deg(f_{i_0} g_{j_0}) 
\ = \ 
w_0+u_{i_0}' - w_0 + v_{j_0}' 
\ = \ 
u_{i_0}' + v_{j_0}'.
$$
and analogously $u_{i_1}' + v_{j_1}' = 0$. 
We conclude $u_{i_0}' = u_{i_1}'$ and $v_{j_0}' = v_{j_1}'$.
Consequently, $f$ is homogeneous with respect to the 
$K_0$-grading.

Using~\cite[Lemma~2.1.4.1]{ArDeHaLa},
we see that the $K$-grading of $R(A,P)$ 
is almost free.
By~\cite[Lemma~3.4.3.5]{ArDeHaLa}, the 
variables $T_{ij}$ and $S_k$ define pairwise 
nonassociated $K$-primes in $R(A,P)$.
Finally, \cite[Thms.~3.4.1.5, 3.4.1.11 and Cor.~3.4.1.6]{ArDeHaLa}
show that the $K$-grading of $R(A,P)$ 
is factorial.
\end{proof}

Now we turn to the converse statements.
For this, we adapt the ideas of~\cite{HaSu}
to our more general setting.

\begin{proof}[Proof of Theorem~\ref{Theorem2}]
Consider $X = \Spec \, R$ with the action of 
$H := \Spec \; \KK[K]$ defined by the grading.
We follow the lines of~\cite[Sec.~4.4.2]{ArDeHaLa}.
Denote by $E_1,\ldots,E_m$ be the prime divisors 
on $X$ such that for any $x \in E_k$ the isotropy 
group $H_x$ is infinite and consider the 
$H$-invariant open subset
$$ 
X_0 
\ := \
\{x \in X; \; H_x \text{ is finite}\}
\ \subseteq \ 
X. 
$$
Then there is a geometric quotient $X_0 \to X_0/H$
with a possibly non-separated smooth curve $X_0/H$.
Consider the separation $X_0/H \to Y$ and let 
$a_\iota, \ldots, a_r \in Y$ be points such that 
every fiber of $X_0/H \to Y$ comprising more than 
one point lies over some $a_i$ and every prime 
divisor of $X$ with non-trivial general 
$H$-isotropy lies over some $a_i$; we denote these
prime divisors by $D_{ij}$, where the $i$ indicates
that $D_{ij}$ lies over $a_i$.

According to~\cite[Thm.~4.4.2.1]{ArDeHaLa},
this quotient
is the characteristic space over the possibly 
non-separated curve $X_0/H$ and 
we have a canonical well-defined 
pullback isomorphism of $K$-graded algebras
$$ 
\mathcal{R}(X_0/H)[T_{ij},S_k]
/ \bangle{T_{ij}^{l_{ij}} - 1_{z_{ij}}}
\ \to \ 
\Gamma(X,\mathcal{O}),
$$
where $S_k$ and $T_{ij}$ are sent to functions with 
divisor $E_k$ and $D_{ij}$ respectively and 
$1_{z_{ij}}$ is the pullback of the canonical 
section of a point $z_{ij} \in X_0/H$ lying 
over $y_i \in Y$. 
As $X_0/H$ is smooth, has only constant 
invertible global functions and finitely generated 
divisor class group, we end up with $Y$ being  
either the affine or the projective line.
The Cox ring of $X_0/H$ is given as 
$$ 
\mathcal{R}(X_0/H)
\ = \ 
\mathcal{R}(Y)[U_{ij}] / \bangle{U_{i1} \cdots U_{in_i} - 1_{a_i}},
$$
where the $U_{ij}$ represent the canonical sections
of the points $z_{i1}, \ldots, z_{in_i} \in X_0/H$ 
lying over $a_i \in Y$ and $1_{a_i}$ is the pullback of the 
canonical section of $a_i$ with respect to 
$X_0/H \to Y$, see~\cite[Prop.~4.4.3.4]{ArDeHaLa}. 
Now, if $Y = \KK$ holds, we set $\imath := 1$ and 
represent $\mathcal{R}(Y)$ as 
$$
\mathcal{R}(Y) 
\ = \ 
\KK[V_1,\ldots, V_r] / \bangle{V_i - V_{i+1} - (a_{i+1}-a_i)}. 
$$ 
Plugging this into the above descriptions of $\mathcal{R}(X_0/H)$
and $\Gamma(X,\mathcal{O})$ gives us Type~1 of 
Construction~\ref{constr:RAP0}.
If $Y = \PP_1$ holds, then we set $\imath := 0$, replace 
the $a_i \in Y$ with representatives 
$a_i \in \KK^2 \setminus \{0\}$ and obtain
$$ 
\mathcal{R}(Y) 
\ = \ 
\KK[V_0,\ldots, V_r] / \bangle{g_0, \ldots, g_{r-2}},
\qquad
h_i \ := \ 
\det
\left[
\begin{array}{lll}
V_i & V_{i+1} & V_{i+2} 
\\
a_i & a_{i+1}& a_{i+2}
\end{array}
\right].
$$
Combining this description with the above presentations 
of $\mathcal{R}(X_0/H)$ and $\Gamma(X,\mathcal{O})$ 
leads to Type~2 of Construction~\ref{constr:RAP0}.

So far, we verified that the algebra 
$R = \Gamma(X,\mathcal{O})$ 
has the desired generators and relations. 
The generators are homogeneous with respect to
$K = \Chi(H)$.
As the $K_0$-grading of $R(A,P_0)$
is the finest possible with this 
property, we obtain a downgrading map
$K_0 \to K$.
Using the arguments of the proof 
of~\cite[Thm.~4.4.2.2]{ArDeHaLa}, we see 
that $K_0 \to K$ is an isomorphism.
\end{proof}

\begin{proof}[Proof of Theorem~\ref{Cor:2}]
One follows exactly the proof 
of~\cite[Thm.~4.4.1.6]{ArDeHaLa}, 
but uses our more general 
Theorem~\ref{Theorem3}
instead of~\cite[Thm.~4.4.2.2]{ArDeHaLa}.
\end{proof}

\begin{proof}[Proof of Corollary~\ref{Cor:3}]
Theorem~\ref{Cor:2} tells us $X \cong X(A,P,\Phi)$
as in Construction~\ref{constr:RAPandPhi}.
Since $X$ is affine, the open subset $\rq{X}(A,P,\Phi)$ 
equals the total coordinate space $\b{X}(A,P)$.
The latter means that $\Phi$ contains the trivial 
cone $\{0\}$. 
This is equivalent to saying that the columns of 
$P$ generate 
the extremal rays of a pointed cone in $\QQ^{r+s}$. 
\end{proof}


\section{Example: affine $\KK^*$-surfaces}
\label{sec:sec-ex}

To illustrate our methods, we consider 
the well-known case of normal affine 
$\KK^*$-surfaces~\cite{FiKp,FlZa} and 
take a closer look to those with at 
most du Val singularities.
As a preparation, 
we first adapt the construction 
of a canonical toric ambient variety 
from~\cite[Sec.~3.2.5]{ArDeHaLa} 
and then extend the resolution 
of singularities~\cite[Thm.~3.4.4.9]{ArDeHaLa}
to our setting.

\begin{construction}
Let $X = X(A,P,\Phi)$ be obtained from 
Construction~\ref{constr:RAPandPhi}.
The~\emph{tropical variety} of $X$ is 
the fan $\trop(X)$ in $\QQ^{r+s}$
having the maximal cones
$$ 
\lambda_i 
\ := \ 
\cone(v_{i1}) + \lin(e_{r+1}, \ldots, e_{r+s}),
\qquad
i \ = \ \iota, \ldots, r,
$$
where $v_{ij} \in \ZZ^{r+s}$ denote the first $n$ 
columns of $P$ and $e_k \in \ZZ^{r+s}$ the $k$-th 
canonical basis vector.

\begin{center}

\ \hfill
\begin{tikzpicture}[scale=0.4]
\draw[thick, draw=black, fill=gray!30!] (0,2) -- (2.4,3) -- (2.4,-1) -- (0,-2) -- cycle; 
\draw[thick, draw=black, fill=gray!60!, fill opacity=0.90] (0,2) -- (2.65,1) -- (2.65,-3) -- (0,-2) -- cycle; 
\node at (1.325,-4) {\tiny Type~1};
\end{tikzpicture}
\hfill 
\begin{tikzpicture}[scale=0.4]
\draw[thick, draw=black, fill=gray!90!] (0,2) -- (-3,2) -- (-3,-2) -- (0,-2) -- cycle; 
\draw[thick, draw=black, fill=gray!30!] (0,2) -- (2.4,3) -- (2.4,-1) -- (0,-2) -- cycle; 
\draw[thick, draw=black, fill=gray!60!, fill opacity=0.90] (0,2) -- (2.65,1) -- (2.65,-3) -- (0,-2) -- cycle; 
\node at (0,-4) {\tiny Type~2};
\end{tikzpicture}
\hfill \

\end{center}

\noindent 
For a face $\delta_0 \preceq \delta$ 
of the orthant $\delta \subseteq \QQ^{n+m}$,
let $\delta_0^* \preceq \delta$ denote the 
complementary face
and call $\delta_0$ \emph{admissible} if
\begin{itemize}
\item
the relative interior of $P(\delta_0)$ intersects $\trop(X)$, 
\item
the image $Q(\delta_0^*)$ comprises a cone of $\Phi$,
\end{itemize}
where  
$Q \colon \ZZ^{n+m} \to K = \ZZ^{n+m}/P^*(\ZZ^{r+s})$ 
is the projection. 
Then we obtain fans $\rq{\Sigma}$ in $\ZZ^{n+m}$ 
and~$\Sigma$ in $\ZZ^{r+s}$ of pointed cones 
by setting
$$ 
\rq{\Sigma}
\ := \ 
\{
\delta_1 \preceq \delta_0; \; 
\delta_0 \preceq \delta \text{ admissible} 
\},
\qquad
\Sigma 
\ := \ 
\{
\sigma \preceq P(\delta_0); \; 
\delta_0 \preceq \delta \text{ admissible} 
\}.
$$
With the toric varieties $\rq{Z}$ and $Z$ 
associated to $\rq{\Sigma}$ and $\Sigma$
respectively, 
we obtain a commutative diagramm of 
characteristic spaces
$$ 
\xymatrix{
{\rq{X}(A,P,\Phi)}
\ar@{}[r]|{\qquad\subseteq}
\ar[d]_{\quot H}
&
{\rq{Z}}
\ar[d]^{\quot H}
\\
X(A,P,\Phi)
\ar@{}[r]|{\qquad\subseteq}
&
Z
}
$$
The inclusions are $T$-equivariant closed 
embeddings, where $T$ acts on $Z$ 
as the subtorus of the $(r+s)$-torus 
corresponding to $0 \times \ZZ^{s} \subseteq  \ZZ^{r+s}$.
Moreover, $X(A,P,\Phi)$ intersects every closed toric 
orbit of $Z$.
\end{construction}

Observe that rays of the fan $\Sigma$ have precisely 
the columns of the matrix $P$ as its primitive 
generators.  
The following recipe for resolving singularities
directly generalizes~\cite[Thm.~3.4.4.9]{ArDeHaLa};
a related approach using polyhedral divisors is 
presented in~\cite{LiSu}.

\begin{construction}
\label{constr:ressing}
Let $X = X(A,P,\Phi)$ be obtained from 
Construction~\ref{constr:RAPandPhi}
and consider the canonical toric embedding
$X \subseteq Z$ and the defining fan $\Sigma$ 
of $Z$.
\begin{itemize}
\item
Let $\Sigma'  = \Sigma \sqcap \trop(X)$ be the 
coarsest common refinement.
\item
Let $\Sigma''$ be any regular subdivision of the 
fan $\Sigma'$.
\end{itemize}
Then $\Sigma'' \to \Sigma$ defines a proper toric 
morphism $Z'' \to Z$ and with the proper transform 
$X'' \subseteq Z''$ of $X \subseteq Z$, the morphism
$X'' \to X$ is a resolution of singularities.
\end{construction}

\begin{remark}
In the setting of Construction~\ref{constr:ressing},
the variety $X''$ has again a torus action 
of complexity one and thus is of the form 
$X'' = X(A'',P'',\Phi'')$.
We have $A'' = A$ and $P''$ is obtained from $P$ 
by inserting the primitive generators of $\Sigma''$ 
as new columns.
Moreover, $\Phi''$ is the Gale dual of $\Sigma''$, 
that means that with the corresponding projection $Q''$ 
and orthant $\delta''$ we have
$$ 
\Phi'' 
\ = \ 
\{
Q''(\delta_0^*); \; \delta_0 \preceq \delta''; 
\; P''(\delta_0) \in \Sigma''
\}.
$$
\end{remark}

As an example class we now consider rational normal 
affine  $\KK^*$-surfaces.
By Corollary~\ref{Cor:3}, 
any \emph{affine} rational normal variety $X$
with only constant invertible functions and a 
torus action of complexity one is of the form 
$$
X 
\ = \ 
X(A,P)
\ := \ 
\Spec \, R(A,P)_0.
$$ 
Moreover, using the fact that the columns of $P$ 
generated the extremal rays of a pointed cone in 
$\QQ^{r+s}$ we directly obtain the following.

\begin{remark}
Consider a rational normal affine $\KK^*$-surface 
$X = X(A,P)$. 
Then we have $s=1$ and there are three possible cases 
for the defining matrix $P$:
\begin{enumerate}
\item
The \emph{elliptic} case: we are in Type~2 and we have 
$n_0 = \ldots = n_r = 1$, $m=0$ and $s=1$.
\item
The \emph{parabolic} case: we are in Type~1 and we have
$n_1 = \ldots = n_r = 1$, $m=1$ and $s=1$.  
\item
The \emph{hyperbolic} case: we are in Type~1 and we have
$n_1, \ldots, n_r \le 2$, $m=0$ and $s=1$.  
\end{enumerate}
\end{remark}

We take a closer look at the surfaces $X = X(A,P)$ 
with at most du Val singularities. 
Recall that these are exactly the singularities 
with a resolution graph of type $\bm{A}$, $\bm{D}$ or~$\bm{E}$.
The singularities of type~$\bm{A}$ are precisely
the toric du Val surface singularities; 
we refer to~\cite{CoLiSc}
for an exhaustive treatment of this case.

\begin{proposition}
Let $X$ be a parabolic or hyperbolic normal 
affine $\KK^*$-surface. 
If $x_0 \in X$ is a du Val singularity, then 
it is of type $\bm{A}$.
\end{proposition}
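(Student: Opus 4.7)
My plan is to invoke the explicit resolution recipe of Construction~\ref{constr:ressing} and to exploit the strong restrictions on the shape of the tropical variety in the parabolic and hyperbolic cases. Writing $X = X(A,P)$ with $s=1$ and data of Type~1, the maximal cones of $\trop(X)$ are the half-planes
\[
\lambda_i \ = \ \cone(v_{i1}) + \lin(e_{r+1}),\qquad i=1,\ldots,r,
\]
each contained in the two-plane $V_i := \lin(e_i,e_{r+1})$, and pairwise intersecting exactly in the common one-dimensional lineality $\lin(e_{r+1})$. By the very shape of $P$ in Type~1, the columns of $P$ attached to block $i$ all lie in $V_i$ and, in the parabolic case, the columns attached to the $S_k$ lie in $\lin(e_{r+1})$.

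Second, I would place the singular point $x_0$ inside the canonical toric embedding $X \subseteq Z$. Since $X$ is a surface and $x_0$ is an isolated singularity, the unique cone $\sigma \in \Sigma$ with $x_0 \in X \cap O_\sigma$ must be two-dimensional and satisfy $\relint(\sigma) \cap \trop(X) \ne \emptyset$ by the tropical--orbit correspondence. A two-dimensional cone whose relative interior meets one of the two-dimensional half-planes $\lambda_i$ is forced to lie in $V_i$, and the previous paragraph then identifies $\sigma$ as a subcone of $\lambda_i$ for a single index $i$.

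Third, I would apply Construction~\ref{constr:ressing}: the refinement $\Sigma''$ regularly subdivides $\sigma$ inside the two-dimensional half-plane $\lambda_i$. Any regular subdivision of a two-dimensional cone produces a sequence of new rays linearly ordered along the arc between its two boundary rays; the corresponding exceptional divisors of the morphism $X'' \to X$ over $x_0$ then form a chain of smooth rational curves. Consequently the dual graph of the minimal resolution at $x_0$ is a path, and since a du Val singularity is determined by its dual resolution graph, $x_0$ must be of type~$\bm{A}$.

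The key step, and main obstacle, is the claim that the singular cone $\sigma$ lies in a single plane $V_i$. It rests on the tropical--orbit dictionary together with the block structure of $P$ in Type~1, which ensures that a two-dimensional cone of $\Sigma$ meeting $\trop(X)$ in its relative interior cannot jump between distinct $V_i$, $V_j$ across the common lineality. This Type~1 feature is essential: in Type~2 the extra initial column $-l_0$ can glue blocks together and produce two-dimensional cones of $\Sigma$ straddling several $V_i$, which is why the statement excludes the elliptic case.
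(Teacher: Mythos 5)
Your argument is correct and takes essentially the same route as the paper: in the parabolic and hyperbolic cases the Type~1 block structure of $P$ forces every cone of $\Sigma$ to lie in a single half-plane $\lambda_i$ of $\trop(X)$, so $\Sigma'=\Sigma$ and the resolution of Construction~\ref{constr:ressing} is a regular subdivision of two-dimensional cones, whose exceptional curves over $x_0$ form a chain, giving type $\bm{A}$. The only differences are cosmetic: you localize at the cone over $x_0$ where the paper argues globally that $\Sigma$ is supported on $\trop(X)$, and your intermediate assertion that a two-dimensional cone whose relative interior meets $\lambda_i$ must lie in $V_i$ is not true for arbitrary cones but, as your final paragraph correctly explains, does hold for cones of $\Sigma$ because they are spanned by columns of $P$.
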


\begin{proof}
In the parabolic and hyperbolic cases, the 
fan $\Sigma$ of the canonical ambient toric variety
is supported on the tropical variety $\trop(X)$
and we have $\Sigma' = \Sigma$ in the first step of
the resolution of singularities according to 
Construction~\ref{constr:ressing}.
The second step means regular subdivision of 
the purely two-dimensional fan $\Sigma' = \Sigma$
and, in the du Val case, we end up with resolution 
graphs of type $\bm{A}$; use~\cite[Sec.~5.4.2]{ArDeHaLa}
for computing intersection numbers.
\end{proof}

We turn to the elliptic case.
In case of a singularity of type $\bm{D}$ or $\bm{E}$,
we determine the possible $X$ 
and present the defining data and the Cox ring 
for $X$ as well as for the minimal 
resolution~$\tilde X$;
see~\cite{FaGoLa, LiSu, MDB} for other 
approaches.

\begin{proposition}
\label{thm:duval}
Let $X$ be an elliptic normal affine $\KK^*$-surface
with a du Val singularity $x_0 \in X$.
If $x_0$ is of type $\bm{A}$, then $X$ is 
an affine toric surface.
If $x_0$ is of type $\bm{D}$ or $\bm{E}$,
then $X \cong X(A,P)$, where 
$$ 
A
\ = \ 
{\tiny
\left[
\begin{array}{rrr}
0 & -1 & 1 
\\
1 & -1 & 0
\end{array}
\right]
},
$$
the defining matrix $P$ depends on the 
type of $x_0$ as shown in the table below;
we additionally present a defining 
equation for $X \subseteq \KK^3$ from~\cite{Ma}
and the relation $g$ of the Cox 
ring~$\mathcal{R}(X) = \KK[T_1,\ldots,T_4]/\bangle{g}$:
\begin{center}
{\small
\begin{longtable}{cccc}
$x_0$ &  equation in $\KK^3$ & matrix~$P$ & relation~$g$
\\
\hline
\\
$\bm{D}_q$ & \tiny{$T_1^2+T_2T_3^2+T_2^{q-1}$} &
\tiny{$\left[
\begin{array}{rrr}
 -2 & q-2& 0\\
 -2 & 0 & 2\\
 -1 & 1 & 1\\
\end{array}
\right]$}
& \tiny{$T_1^2 +T_2^ {q-2} + T_3 ^{2}$}
\\
\\
\hline
\\
$\bm{E}_6$ & \tiny{$T_1^2+T_2^3+T_3^{4}$}&
$\tiny{\left[
\begin{array}{rrr}
 -3 & 3& 0\\
 -3 & 0 & 2\\
 -2 & 1 & 1\\
\end{array}
\right]}$
& \tiny{$T_1^3 +T_2^3 + T_3 ^2$}
\\
\\
\hline
\\
$\bm{E}_7$&\tiny{$T_1^2+T_2^3+T_2T_3^3$}&
$\tiny{\left[
\begin{array}{rrr}
 -4 & 3& 0\\
 -4 & 0 & 2\\
 -3 & 1 & 1\\
\end{array}
\right]}$
& \tiny{$T_1^4 +T_2^3 + T_3 ^{2}$}
\\
\\
\hline
\\
$\bm{E}_8$&\tiny{$T_1^2+T_2^3+T_3^{5}$}&
$\tiny{\left[
\begin{array}{rrr}
 -5 & 3 & 0\\
 -5 & 0 & 2\\
 -4 & 1 & 1\\
\end{array}
\right]}$
& \tiny{$T_1^5 +T_2^3 + T_3 ^{2}$}
\end{longtable}
}
\end{center}
\noindent
Moreover, Construction~\ref{constr:ressing}
provides a minimal resolution 
of singularities $\tilde X \to X$ 
with $\tilde X = X(A,\tilde P, \tilde \Phi)$,
where~$\tilde P$ depends on the type 
of $x_0$ as shown below; we moreover 
list the relation $\tilde g$ of the Cox 
ring~$\mathcal{R}(\tilde X) = \KK[T_{ij},S_1]/\bangle{\tilde g}$:
\begin{center}
{\small
\begin{longtable}{cccc}
$x_0$ & matrix $\tilde P$ & relation $\tilde g$
\\
\hline
\\
$\bm{D}_q$ &
$\tiny{\left[
\begin{array}{rrrrrrrrr}
 -2 & -1&q-2&q-3&\hdots&1& 0& 0 &0\\
 -2 & -1&0 & 0&\hdots & 0& 2& 1 &0\\
 -1 &  0&1 & 1 &\hdots & 1& 1& 1&1\\
\end{array}
\right]}$
& \tiny{$T_{11}^2  T_{12}+ T_{21}^ {q-2} \cdots T_{2,q-2}+ T_{31} ^{2}T_{32}$}\\
\\
\hline
\\
$\bm{E}_6$ &
\tiny{$\left[
\begin{array}{rrrrrrrrr}
 -3 & -2&-1&3 &2&1& 0&0&0\\
 -3 & -2&-1&0 &0&0& 2&1&0\\
 -2 & -1& 0&1 &1&1& 1&1&1\\
\end{array}
\right]$}
& \tiny{$T_{11}^3 T_{12}^2T_{13}+ T_{21}^3 T_{22}^2T_{23} +  T_{31}^2 T_{32}$}\\
\\
\hline
\\
$\bm{E}_7$&
$\tiny{\left[
\begin{array}{rrrrrrrrrr}
 -4&-3 & -2&-1&3 &2&1& 0&0&0\\
-4& -3 & -2&-1&0 &0&0& 2&1&0\\
 -3&-2 & -1& 0&1 &1&1& 1&1&1\\
\end{array}
\right]}$
& \tiny{$T_{11}^4\cdots T_{14}+ T_{21}^3 T_{22}^2T_{23} +  T_{31}^2 T_{32}$}\\
\\
\hline
\\
$\bm{E}_8$&
\tiny{$\left[
\begin{array}{rrrrrrrrrrr}
 -5 &  -4&-3 & -2&-1&3 &2&1& 0&0  & 0\\
 -5 &  -4& -3 & -2&-1&0 &0&0& 2&1 & 0\\
 -4 &   -3&-2 & -1& 0&1 &1&1& 1&1 & 1\\
\end{array}
\right]$}
& \tiny{$T_{11}^5\cdots T_{15}+ T_{21}^3 T_{22}^2T_{23} +  T_{31}^2 T_{32}$}
\end{longtable}}
\end{center}
\noindent
The fan $\tilde \Sigma$ of the canonical ambient 
toric variety $\tilde Z$ of $\tilde X$ is the 
unique fan with only two-dimensional maximal cones,
all of them lying on $\trop(X)$,
and having as one-dimensional precisely the rays 
through the columns of $\tilde P$.
The corresponding bunch of cones $\tilde \Phi$ is
the Gale dual of $\tilde \Sigma$.
\end{proposition}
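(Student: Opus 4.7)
The plan is to case-split on the ADE type of $x_0$, verify the tabulated data in the $\bm{D}$ and $\bm{E}$ cases by direct computation, and then apply Construction~\ref{constr:ressing} to produce the minimal resolution.

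For type $\bm{A}$, the key observation is that in the elliptic case with $r \geq 2$ the tropical variety $\trop(X)$ consists of three two-dimensional cones meeting along the axis $\lin(e_{r+s})$, so the dual graph of any resolution of the unique fixed point has a central vertex where three chains of exceptional curves meet. A chain-shaped diagram of type $\bm{A}$ therefore forces the degenerate case $r = 1$: here $I = \emptyset$, Construction~\ref{constr:RAP0} produces a polynomial ring $R(A,P) = \KK[T_{01},T_{11}]$, and $X = \Spec R(A,P)_0$ is an affine toric cyclic-quotient surface.

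For types $\bm{D}$ and $\bm{E}$, which force $r = 2$ and make $P$ a $3 \times 3$ matrix, I would proceed in two steps. First, from the given matrix $A$ I read off the columns $a_0 = (0,1)$, $a_1 = (-1,-1)$, $a_2 = (1,0)$, and the exponents $l_0, l_1, l_2$ off the first two rows of the tabulated $P$. Expanding the determinant in Construction~\ref{constr:RAP0} (Type~2) yields
\[
g_0 \;=\; T_1^{l_0} + T_2^{l_1} + T_3^{l_2},
\]
which matches the Cox-ring relation $g$ for each of $\bm{D}_q$, $\bm{E}_6$, $\bm{E}_7$, $\bm{E}_8$. Second, Corollary~\ref{Cor:3} identifies $X$ with $\Spec R(A,P)_0$, the $\KK^*$-action being induced by the third row of $P$; a direct computation of the $K$-degree zero subring of $R(A,P)$ produces three invariant generators whose single syzygy I would match against Mautner's normal form~\cite{Ma} displayed in the middle column of the table, thereby pinning down the stated $P$.

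For the minimal resolution I apply Construction~\ref{constr:ressing}. Since we are in Type~2 with $n_i = 1$, $m = 0$, $s = 1$, the fan $\Sigma$ is already supported on $\trop(X)$, so $\Sigma' = \Sigma$ and only the regular-refinement step remains; each two-dimensional cone is subdivided minimally by the Hirzebruch--Jung chain between its extremal rays, producing exactly the extra columns of $\tilde P$ listed in the proposition. Theorem~\ref{Theorem3} applied to $\tilde P$ then gives the Cox ring of $\tilde X$, with the relation $\tilde g$ arising from the same determinantal formula on the extended exponent tuples. Minimality is confirmed via the intersection-number formulas of \cite[Sec.~5.4.2]{ArDeHaLa}, which show every exceptional curve is a $(-2)$-curve and that the dual graph reproduces the ADE diagram of $x_0$. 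I expect the main obstacle to be the second step of the $\bm{D}$/$\bm{E}$ verification, namely computing the $\KK^*$-invariants of $V(g) \subseteq \KK^3$ explicitly and matching their syzygy against Mautner's normal form uniformly across all four types; a close second is the case-by-case Hirzebruch--Jung calculation verifying that the listed $\tilde P$ yields the correct ADE resolution graph.
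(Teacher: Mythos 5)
Your overall architecture tracks the paper's: rule out type $\bm{A}$ via the star shape of the exceptional graph, reduce to $r=2$, resolve with Construction~\ref{constr:ressing}, and check $(-2)$-curves via \cite[Sec.~5.4.2]{ArDeHaLa}. But there are two genuine problems. First, in the elliptic case the fan $\Sigma$ of the canonical ambient toric variety of the \emph{affine} surface $X=\Spec\,R(A,P)_0$ contains the full three-dimensional cone spanned by the columns of $P$; it is \emph{not} supported on the two-dimensional $\trop(X)$, so your claim that $\Sigma'=\Sigma$ is false -- that is the parabolic/hyperbolic situation treated in the preceding proposition. The first step $\Sigma'=\Sigma\sqcap\trop(X)$ is essential here: it replaces that cone by the $r+1$ two-dimensional cones $\cone(v_i,e_{r+1})$ and thereby introduces the new ray through $e_{r+1}$, i.e.\ the central exceptional curve, which is the branch vertex of the $\bm{D}/\bm{E}$ diagram and accounts for the last column $(0,0,1)$ of every $\tilde P$ in the table. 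As written, your resolution paragraph cannot produce that column, and it is inconsistent with your own type-$\bm{A}$ argument, which relies on precisely this central vertex.

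Second, the heart of the proposition -- that the singularity type forces the stated $P$ -- is exactly the step you defer. You read the exponents $l_i$ off the answer and propose to pin down the third row $d$ by computing the invariant ring $R(A,P)_0$ and matching its syzygy against the normal forms of \cite{Ma} (Matsuki, not ``Mautner''); this is workable in principle but is left entirely unexecuted, and it is much harder than what the resolution you have already built gives you for free. The paper's argument is: the arm of the exceptional graph over the $i$-th cone consists of the inserted rays, and for a chain of $(-2)$-curves their number is $l_i-1$ with $d_i\equiv 1\bmod l_i$; the arm lengths of $\bm{D}_q,\bm{E}_6,\bm{E}_7,\bm{E}_8$ then determine $(l_0,l_1,l_2)$, and the self-intersection of the \emph{central} curve fixes the one remaining parameter in $d$ (e.g.\ $c=-1$ in the normalized family $P_c$ for $\bm{E}_6$). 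A smaller gap: your type-$\bm{A}$ reduction should also dispose of the case $r\ge 2$ with some $l_i=1$, by eliminating the linearly occurring variable and lowering $r$ until the toric case is reached, rather than treating only $r=1$.
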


\goodbreak

\begin{proof}
We only have to consider the case that 
$X = X(A,P)$ is not a toric surface and 
thus can assume $r \ge 2 $ and 
$l_i := l_{i1} > 1$ for all $i = 0, \ldots, r$. 
We resolve the singularity $x_0 \in X$ 
according to Construction~\ref{constr:ressing}.
The first step gives us a fan with $r+1$ maximal 
cones, each of dimension two:
$$
\cone(v_0,e_{r+1}), \ldots, \cone(v_r,e_{r+1}),
$$
where $v_i \in \QQ^{r+1}$ denotes the $i$-th column 
of $P$ and we may assume that $e_{r+1}$ is the 
$(r+1)$-th canonical basis vector.
In the second step, we perform the minimal 
regular subdivision of these cones.
This gives indeed a minimal resolution 
$\tilde X \to X$ of $x_0$ and the resulting 
picture reflects the resolution graph.
We see that $x_0$ cannot be of type $\bm{A}$ 
und thus is of type $\bm{D}$ or $\bm{E}$.
We end up with $r=2$ and defining data
$$
A
\ = \ 
\left[
\begin{array}{rrr}
0 & -1 & 1 
\\
1 & -1 & 0
\end{array}
\right],
\qquad
\qquad
P 
\ = \ 
\left[
\begin{array}{rrr}
-l_0 & l_1 & 0
\\
-l_0 & 0   & l_2
\\
d_0  & d_1 & d_2 
\end{array}
\right].
$$
Moreover, because all exceptional curves are of 
self intersection -2, we must have 
$d_i \equiv 1 \mod l_i$ for $i=1,2,3$.
That means, that we inserted $l_i-1$ new 
rays to obtain the minimal regular subdivision of 
the $i$-th cone.

As a sample, we continue the case of an 
$\bm{E}_6$-singularity.
By the shape of the corresponding resolution graph, 
we have $l_0 = l_1 = 3$ and $l_2 = 2$ after 
renumbering the columns suitably.
This establishes the $P_0$-block.
By suitable row operations we can achieve
$$
P 
\ = \ 
P_c 
\ := \ 
\left[
\begin{array}{rrr}
- 3 & 3 & 0
\\
- 3 & 0 & 2
\\
1+3c   & 1 & 1 
\end{array}
\right].
$$
Every $c \in \ZZ$
yields a matrix $P_c$ admissible for 
Construction~\ref{constr:RAPdown}.
The minimal resolution $\tilde X_c$ of $X_c$
is the $\KK^*$-surface defined by $A$ and 
the matrix
$$
\tilde P_c
\ = \ 
\left[
\begin{array}{rrrrrrrrr}
 -3   & -2   & -1  & 3 & 2 & 1 & 0 & 0 & 0 \\
 -3   & -2   & -1  & 0 & 0 & 0 & 2 & 1 & 0 \\
 1+3c & 1+2c & 1+c & 1 & 1 & 1 & 1 & 1 & 1 \\
\end{array}
\right].
$$
Only for $c = -1$, we obtain 
self intersection $-2$ for all 
exceptional curves; in fact, 
the one corresponding to $(0,0,1)$ 
is important here.
The fan $\tilde \Sigma$ of the canonical 
ambient toric variety $\tilde Z$ of 
$\tilde X = \tilde X_{-1}$ sits on the 
tropical variety $\trop(X)$ and looks 
as follows:
\begin{center}
\begin{tikzpicture}[scale=0.4]
\draw[thick, draw=black, fill=gray!30!] 
(0,2.1) -- (-3,2.1) -- (-3,-2) -- (0,-2) -- cycle; 
\draw[thick, draw=black, fill=gray!90!] 
(0,-1) -- (-3,0.91) -- (0,1) -- cycle; 
\draw[thick, draw=black] (0,-1) -- (-2,0.97);
\draw[thick, draw=black] (0,-1) -- (-1,0.94);
\draw[black, fill=black] (-3,0.92) circle (.7ex);
\draw[black, fill=black] (-2,0.94) circle (.7ex);
\draw[black, fill=black] (-1,0.97) circle (.7ex);
\draw[thick, draw=black, fill=gray!10!] 
(0,2.1) -- (2.4,3.1) -- (2.4,-1) -- (0,-2) -- cycle; 
\draw[thick, draw=black, fill=gray!40!] 
(0,-1) -- (2.4,0.9) -- (0,1) -- cycle; 
\draw[thick, draw=black] (0,-1) -- (2.4,0.95);
\draw[black, fill=black] (2.4,0.9) circle (.7ex);
\draw[thick, draw=black] (0,-1) -- (1.2,0.95);
\draw[black, fill=black] (1.2,0.95) circle (.7ex);
\draw[thick, draw=black, fill=gray!20!, fill opacity=0.80] 
(0,2.1) -- (2.65,1.1) -- (2.65,-3) -- (0,-2) -- cycle; 
\draw[thick, draw=black, fill=gray!60!, fill opacity=0.80] 
(0,-1) -- (2.65,-0.2) -- (0,1) -- cycle; 
\draw[thick, draw=black] (0,-1) -- (0.883,0.6);
\draw[thick, draw=black] (0,-1) -- (1.766,0.2);
\draw[black, fill=black] (2.65,-0.2) circle (.7ex);
\draw[black, fill=black] (0.883,0.6) circle (.7ex);
\draw[black, fill=black] (1.766,0.2) circle (.7ex);
\draw[black, fill=black] (0,1) circle (.7ex);
\node at (0,-4) {\tiny Resolution of the $\bm{E}_6$-Singularity};
\end{tikzpicture}
\end{center}
\end{proof}

\begin{remark}
Note that the approach via the defining 
data~$A$ and $P$ establishes 
\emph{a posteriori} that every du Val surface 
singularity can be realized as the fixed point 
of an elliptic $\KK^*$-surface.
Similarly, the defining equation
for $X \subseteq \KK^3$ is easily seen 
to be the defining relation of the Veronese 
subalgebra $\Gamma(X,\mathcal{O}) = R(A,P)_0$ 
of the Cox ring $\mathcal{R}(X) = R(A,P)$.
\end{remark}


\begin{thebibliography}{10}


\bibitem{AlHa}
K.~Altmann, J.~Hausen:
\emph{Polyhedral divisors and algebraic torus actions.}
Math. Ann. 334 (2006), no. 3, 557--607.

\bibitem{ArDeHaLa}
I.~Arzhantsev, U.~Derenthal, J.~Hausen, A.~Laface:
\emph{Cox rings.} 
Cambridge Studies in Advanced Mathematics, Vol.~144.
Cambridge University Press, Cambridge, 2014.

\bibitem{ArHaHeLi}
I.~Arzhantsev, E.~Herppich, J.~Hausen, A.~Liendo:
\emph{The automorphism group of a variety with torus action 
of complexity one.}
Mosc. Math. J. 14 (2014), no. 3, 429--471.

\bibitem{CoLiSc}
D.A.~Cox, J.B.~Little, H.K.~Schenck:
\emph{Toric varieties}. 
Graduate Studies in Mathematics, 124. 
American Mathematical Society, 
Providence, RI, 2011. xxiv+841 pp.

\bibitem{MDB}
Donten-Bury, Marysia:
\emph{Cox rings of minimal resolutions of surface quotient singularities.}
Preprint, arXiv:1301.2633.

\bibitem{FaGoLa}
L.~Facchini, V.~Gonz\'{a}lez-Alonso, M.~Laso\'{n}:
\emph{Cox rings of du Val singularities.} 
Matematiche (Catania) 66 (2011), no. 2, 115--136.

\bibitem{FlZa}
H.~Flenner, M.~Zaidenberg:
\emph{Normal affine surfaces with  $\CC^*$-actions}. 
Osaka J. Math. 40 (2003), no. 4, 981--1009.

\bibitem{FiKp}
K.-H.~Fieseler, L.~Kaup:
\emph{On the geometry of affine algebraic $\CC^*$-surfaces}. 
Problems in the theory of surfaces and their classification 
(Cortona, 1988), 111--140, 
Sympos. Math., XXXII, Academic Press, London, 1991. 

\bibitem{BeHaHuNi}
B.~Bechtold, E.~Huggenberger, J.~Hausen, M.~Nicolussi:
\emph{On terminal Fano 3-folds with 2-torus action}.
Int.~Math.~Res.~Not., doi:10.1093/imrn/rnv190.

\bibitem{HaHe}
J.~Hausen, E.~Herppich:
\emph{Factorially graded rings of complexity one}. 
Torsors, \'etale homotopy and applications to rational 
points, 414--428, 
London Math. Soc. Lecture Note Ser., 405, 
Cambridge Univ. Press, Cambridge, 2013.

\bibitem{HaHeSu}
J.~Hausen, E.~Herppich, H.~S\"u{\ss}:
\emph{Multigraded factorial rings and Fano varieties 
with torus action.} 
Doc. Math. 16 (2011), 71--109.

\bibitem{HaSu}
J.~Hausen, H.~S\"u{\ss}:
\emph{The Cox ring of an algebraic variety with torus action}.
Advances Math. 225 (2010), 977--1012.

\bibitem{KKMSD}
G.~Kempf, F.F.~Knudsen, D.~Mumford, B.~Saint-Donat:
\emph{Toroidal embeddings. I.} 
Lecture Notes in Mathematics, Vol. 339. Springer-Verlag, 
Berlin-New York, 1973. viii+209 pp. 

\bibitem{Ku}
D.~Kussin:
\emph{Graded factorial algebras of dimension two}.
Bull. London Math. Soc. 30 (1998), no. 2, 123--128. 

\bibitem{LiSu}
A.~Liendo, H.~S\"u{\ss}:
\emph{Normal singularities with torus actions}. 
Tohoku Math. J. (2) 65 (2013), no. 1, 105--130

\bibitem{Ma}
K.~Matsuki:
\emph{Introduction to the Mori program}. 
Universitext. Springer-Verlag, New York, 2002. xxiv+478 pp. 

\bibitem{Mo}
S.~Mori:
\emph{Graded factorial domains}.
Japan J. Math. 3 (1977),
no. 2, 223--238.

\bibitem{Ti}
D.~Timashev:
\emph{Torus actions of complexity one.}
Toric topology, 349--364, 
Contemp. Math., 460, Amer. Math. Soc., 
Providence, RI, 2008. 


\end{thebibliography}
\end{document}